\theoremstyle{plain}
\newtheorem{thm}{Theorem}[section]
\newtheorem{cor}[thm]{Corollary}
\newtheorem{lem}[thm]{Lemma}
\newtheorem{prop}[thm]{Proposition}
\newtheorem{rem}[thm]{Remark}
\newtheorem{defi}[thm]{Definition}
\def\sqr#1#2{{\vcenter{\vbox{\hrule height.#2pt
              \hbox{\vrule width.#2pt height#1pt \kern#1pt \vrule
width.#2pt}
              \hrule height.#2pt}}}}
\def\Sp{{\mathrm {Sp}}}
\def\lb{\label}
\def\d{{d\over dt}}
\def\3n{\negthinspace \negthinspace \negthinspace }
\def\2n{\negthinspace \negthinspace }
\def\1n{\negthinspace }
\def\R{{\mathbb R}}
\def\ga{{\gamma}}
\def\cH{{\cal H}}
\def\cT{{\cal T}}
\def\no{\noindent}
\def\bs{\bigskip}
\def\dim{\hbox{\rm dim$\,$}}
\def\({\Big (}
\def\){\Big )}
\def\[{\Big[}
\def\]{\Big]}
\def\be{\begin{equation}}
\def\bel{\begin{equation}\label}
\def\ee{\end{equation}}
\def\bea{\begin{eqnarray}}
\def\eea{\end{eqnarray}}
\def\bt{\begin{theorem}}
\def\et{\end{theorem}}
\def\bc{\begin{corollary}}
\def\ec{\end{corollary}}
\def\bl{\begin{lemma}}
\def\el{\end{lemma}}
\def\bp{\begin{proposition}}
\def\ep{\end{proposition}}
\def\br{\begin{remark}}
\def\er{\end{remark}}
\def\ba{\begin{array}}
\def\ea{\end{array}}
\def\bd{\begin{definition}}
\def\ed{\end{definition}}
\DeclareMathOperator{\Id}{Id} 
\DeclareMathOperator{\image}{im}
\DeclareMathOperator{\Graph}{Gr}
\DeclareMathOperator{\Dist}{dist}
\begin{document}

\title{\bf  Decomposition of Spectral flow and Bott-type Iteration Formula  }\author{Xijun Hu\thanks{Partially supported
by NSFC(No. 11425105, 11790271), E-mail:xjhu@sdu.edu.cn } \quad Li Wu \thanks{ 
 E-mail: 201790000005@sdu.edu.cn }    \\ \\
 Department of Mathematics, Shandong University\\
Jinan, Shandong 250100, The People's Republic of China\\
}

\maketitle
\begin{abstract} 
Let $A(t)$ be a continuous path of Fredhom operators,  we first  prove that the spectral flow $sf(A(t))$ is  cogredient invariant. Based on this property,  we give a decomposition formula of spectral flow   if the path is invariant  under a matrix-like  cogredient.  As applications, we give the generalized Bott-type iteration formula  for linear Hamiltonian systems.   \end{abstract}

\bs

\no{\bf 2010 Mathematics Subject Classification}: 58J30, 37B30, 53D12,  

\bs

\no{\bf Key Words}. spectral flow;  cogredient invariant; decomposition formula; Bott-type iteration formula

\section{Introduction}

In this paper, we consider the decomposition of spectral flow for a path of self-adjoint Fredholm operators. Let  $\cH$ be a separable Hilbert space,   and  we denote by   
 $\cal{FS} (\cH)$ be the set of all  densely defined self-adjoint Fredholm operator on $\cH $. We always equipped   $\cal{FS} (\cH)$  with the gap topology. 
  For a continuous path $A(s)\in \mathcal{FS}(\cH)$, $t\in[a,b]$. The spectral flow $sf(A(t); t\in[a,b])$ is an integer  that counts the net number of eigenvalues that change sign. This notation is first introduced by  Atiyah-Patodi-Singer \cite{APS76} in their study of index theory on manifolds with boundary, since then it had found many significant applications, see \cite{ZL99,BLP05}   and reference therein.

Some basic property of spectral flow such as homotopy invariant, path additivity, direct sum e.t.  are well known, please refer the Appendix.  Our first result is   another basic property which is called   cogredient invariant property of spectral flow.   For convenience,  we first  introduce some notations. 
Let $\cH_1, \cH_2$ be separable Hilbert space,  we denote by  $\cal{L}(\cH_1, \cH_2)$ and  $\cal{C}(\cH_1, \cH_2)$ the set of bounded and closed operators from $\cH_1\to \cH_2$.   We  let $\cal{S}(\cH)$ be the set of self-adjoint operators on $\cH$. 
 For convenience, we denote by $\cal{L}^*(\cH_1, \cH_2), \cal{C}^*(\cH_1, \cH_2), \cal{S}^*(\cH), \cal{FS}^* (\cH)$ be the invertible subsets.  
\begin{thm}\label{th:coginva}  Let 
 $M_s\in C([a,b], \cal{L}^*(\cH_1, \cH_2))$,  $A_s\in C([a,b], \mathcal{FS}(\cH_2)) $,   then 
 \bea M_s^*A_sM_s\in C([a,b], \mathcal{FS}(\cH_1)), \label{th1f1}\eea and we have  
 \bea sf(A_s; s\in[a,b])=sf(M_s^*A_sM_s; [a,b]). \label{sfcog}  \eea
\end{thm}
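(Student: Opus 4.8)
The plan is to reduce the statement to the case where the path $M_s$ is \emph{constant}, and then to handle the constant case by a standard homotopy argument. First I would observe that the assignment $s \mapsto M_s^*A_sM_s$ is continuous into $\mathcal{FS}(\cH_1)$: since $M_s$ is bounded with bounded inverse and depends continuously on $s$ in the operator norm, conjugation preserves the Fredholm property, self-adjointness, and continuity in the gap topology. (I would invoke the continuity facts for the gap topology recalled in the Appendix, together with the elementary fact that $M_s^*A_sM_s$ has domain $M_s^{-1}(\mathrm{dom}\,A_s)$ and is self-adjoint Fredholm because $M_s$ is an isomorphism.) This gives \eqref{th1f1}.

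For \eqref{sfcog}, the key reduction is that $GL(\cH_1,\cH_2) = \mathcal{L}^*(\cH_1,\cH_2)$ is (path-)connected when $\cH_1\cong\cH_2$ (Kuiper's theorem, or more simply the fact that the invertibles are connected via polar decomposition when we only need a path, not contractibility), and similarly we may deform $M_s$ to a constant path $M_a$ keeping endpoints fixed. Concretely, I would build a two-parameter family $M_{s,r}$ with $M_{s,0}=M_s$ and $M_{s,1}\equiv M_a$, agreeing with $M_s$ at $s=a,b$ for all $r$ — for instance by concatenating the path $s\mapsto M_s$ with a contraction, or by using $M_{s,r} = M_{(1-r)s + ra}$ suitably reparametrized so that the endpoints stay at $M_a$ and $M_b$. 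The corresponding family $A_{s,r} := M_{s,r}^*A_sM_{s,r}$ is a continuous two-parameter family in $\mathcal{FS}(\cH_1)$, and along the $r$-homotopy the endpoints $s=a$ and $s=b$ move through \emph{invertible} operators (since $A_a, A_b$ need not be invertible I must instead fix this by a preliminary small perturbation, or better: the spectral flow is a homotopy invariant of paths with, in general, varying but controlled endpoints — I will use the version of homotopy invariance from the Appendix that allows the endpoints to vary, provided the spectral flow contributions at the endpoints are accounted for; here they cancel because at $s=a$ and $s=b$ the conjugating operator is the \emph{same} for all $r$, namely $M_a$ resp.\ $M_b$, so the endpoints do not move at all). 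Hence $sf(M_s^*A_sM_s;[a,b]) = sf(M_a^*A_sM_a;[a,b])$.

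It therefore remains to prove \eqref{sfcog} when $M_s\equiv M$ is constant, i.e.\ $sf(A_s;[a,b]) = sf(M^*A_sM;[a,b])$. For this I would use a second homotopy: connect $M$ to a unitary $U$ by a norm-continuous path of invertibles $M(r)$ (polar decomposition $M = U|M|$, then $M(r) = U|M|^{1-r}$ or $M(r)=U((1-r)|M|+rI)$, which stays invertible for all $r\in[0,1]$). Then $A_{s,r}:=M(r)^*A_sM(r)$ is again a continuous two-parameter family in $\mathcal{FS}(\cH_1)$ whose endpoints at $s=a,b$ are $M(r)^*A_aM(r)$ and $M(r)^*A_bM(r)$. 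By homotopy invariance of spectral flow for the family parametrized by $r$ (a homotopy of paths with varying endpoints), $sf$ changes by the difference of spectral flows of the endpoint paths $r\mapsto M(r)^*A_aM(r)$ and $r\mapsto M(r)^*A_bM(r)$; but each of these is itself a conjugation path, and I can argue these two boundary contributions cancel — this is the content of the "cogredient = same number of negative eigenvalues, suitably counted" heuristic. For the remaining unitary case $M=U$, conjugation by a unitary is an isometry of $\cH$ preserving domains and commuting with the spectral projections up to the obvious identification, so $A_s$ and $U^*A_sU$ have the same spectrum with the same multiplicities for every $s$; hence their spectral flows are literally equal, by the definition of $sf$ via spectral projections (or via the crossing form). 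Assembling the three steps — continuity, reduction to constant $M$, reduction to unitary $M$, and the unitary case — yields \eqref{sfcog}.

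The main obstacle I anticipate is bookkeeping the endpoint contributions during the homotopies: spectral flow is only a homotopy invariant for paths with \emph{fixed} endpoints (or endpoints staying in $\mathcal{FS}^*$), and here the endpoints $A_a, A_b$ genuinely move under conjugation when $M_s$ or $M(r)$ is non-unitary. The clean way around this — which I would adopt — is to prove the unitary case first (trivial, as above), and then show directly that for \emph{any} invertible $M$ and any single $A\in\mathcal{FS}(\cH)$, the paths $r\mapsto M(r)^*AM(r)$ (from $A$ to $U^*AU$) have spectral flow zero, because the crossing form at each crossing instant is congruent — via the invertible operator $M(r)$ restricted to the (finite-dimensional) kernel — to the crossing form of a constant path, hence vanishes, or more robustly because one can perturb $A$ to $A+\epsilon$ to make the whole path $r\mapsto M(r)^*(A+\epsilon)M(r)$ lie in $\mathcal{FS}^*$ and then let $\epsilon\to0$. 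With that lemma in hand, all endpoint corrections in the two main homotopies vanish and \eqref{sfcog} follows.
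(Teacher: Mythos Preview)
Your approach is essentially correct but follows a genuinely different route from the paper, and you overcomplicate the endpoint bookkeeping.

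For the constant-$M$ case the paper does \emph{not} reduce to the unitary case via polar decomposition. Instead it proves a general tool (Lemma~\ref{abstract_decomposition}): if a continuous map $f:\mathcal{FS}(\cH_2)\to\mathcal{FS}(\cH_1)$ satisfies (a) $f(A+tI)$ is a ``positive curve'' for small $t>0$ and (b) $\dim\ker f(A)=\dim\ker A$ for every $A$, then $sf(f(A_s))=sf(A_s)$ for any path $A_s$. With $f(A)=M^*AM$ both hypotheses are immediate: $\frac{d}{dt}M^*(A+tI)M=M^*M>0$, and $\ker(M^*AM)=M^{-1}(\ker A)$. This lemma is then reused verbatim in the proof of Theorem~\ref{th:decom}, so the paper's detour pays for itself twice; your polar-decomposition route is valid but one-off.

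For the reduction from varying $M_s$ to constant $M_a$, both you and the paper use the linear homotopy $M_{a+t(s-a)}$, and both face the moving endpoint at $s=b$. Your parenthetical claim that ``the endpoints do not move at all'' is simply wrong for that homotopy, as you half-acknowledge. The clean fix---which the paper uses and which you circle around via crossing forms and $\epsilon$-perturbations---is the one-line observation that along \emph{any} path $r\mapsto N_r^*AN_r$ with $A$ fixed and $N_r$ invertible, $\dim\ker(N_r^*AN_r)=\dim\ker A$ is constant, and a path in $\mathcal{FS}$ with constant kernel dimension has spectral flow zero. That single remark eliminates every boundary contribution in your argument (both in the $M_s\rightsquigarrow M_a$ step and in your $M\rightsquigarrow U$ step), and would let you skip the crossing-form computation and the perturbation entirely.
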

In a  preprint paper \cite{FPS06},  Fitzpatrick-Stuar-Pejsachowicz proved \eqref{sfcog} in the case that $M_s$ is constant, the domain of $A_s$ is  fixed and 
both	$A_a, A_b$  are invertible. Theorem \ref{th:coginva} can be consider as a generalization of their result. 

Our second main result is  the decomposition formula based on the  cogredient invariant property.   Let $\cH_i$ be closed subspace of $\cH$ for $i=1,\cdots,m$, then we define $$ \sum_{1\leq i\leq m} \cH_i=\cH_1+\cdots \cH_m $$
which is the subspace spanned by   $\cH_i$,  $i=1,\cdots,m$. 
  Suppose $g\in\cal{L}(\cH)$, we call $g$ is a matrix-like operator if 
$\sigma(g)=\{\lambda_1,\cdots,\lambda_n\}$ is finite and there exist $m>0$, such that
\bea \cH=\sum_{1\leq i\leq n}\ker(g-\lambda_i)^m.  \eea
We denote by $\mathcal{M}(\cH)$ be the set of matrix-like operators.   For $g\in\cal{M}(\cH)$,  $\lambda\in \sigma(g) $, we set $$\cH_\lambda:=\ker(g-\lambda)^m, $$
and denote \bea F_\lambda=\left\{\begin{array}{cc} \cH_\lambda, \quad  if\quad  \lambda\in \mathbb{U};
\\ \cH_\lambda+\cH_{\bar{\lambda}^{-1}}, \quad \quad  if \quad  \lambda \notin \mathbb{U}.   \end{array}\right.\eea
Then we have 
\bea \cH=\sum_{1\leq i\leq k}F_{\lambda_i}. \nonumber \eea 
Moreover, let $\hat{F}=span\{F_\lambda, \lambda\in\sigma(g)\cap\mathbb{U}^c \} $, we have  the next theorem.
\begin{thm}\label{th:decom} Suppose $g\in \cal{M}(\cH) $ is invertible and preserve the domain of $A$,  $\sigma(g)\cap \mathbb{U}=\{\lambda_1,\cdots,\lambda_j \}$.  Assume    \bea g^* A_sg=A_s,\quad for \quad  s\in[0,1], \label{coinvariant} \eea    then we have 
\bea sf(A_s)=sf(A_s|_{F_{\lambda_1}})+\cdots+sf(A_s|_{F_{\lambda_j}})+\frac{1}{2}(dim\ker (A_1|_{\hat{F}}) -dim\ker (A_0|_{\hat{F}})) . \label{decomf}     \eea 
\end{thm}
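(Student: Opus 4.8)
\noindent\emph{Proof (outline).}\ The plan is to use the cogredient invariance of Theorem~\ref{th:coginva} to replace $A_s$ by an operator that is genuinely block diagonal for an \emph{orthogonal} splitting of the Hilbert space, then to apply the direct sum property of the spectral flow, and finally to handle the part coming from the eigenvalues of $g$ off $\mathbb{U}$ by a reflection symmetry. First I would extract the algebraic content of \eqref{coinvariant}: for $u,v\in\mathrm{Dom}(A_s)$ one has $\langle A_sgu,gv\rangle=\langle (g^*A_sg)u,v\rangle=\langle A_su,v\rangle$. Since $g$ is matrix-like, the spectral projections $P_\lambda$ onto $\cH_\lambda=\ker(g-\lambda)^m$ are polynomials in $g$; hence each $P_\lambda$ preserves $\mathrm{Dom}(A_s)$ and $\mathrm{Dom}(A_s)=\bigoplus_{\lambda\in\sigma(g)}(\mathrm{Dom}(A_s)\cap\cH_\lambda)$. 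The key claim is that $\langle A_su,v\rangle=0$ whenever $u\in\mathrm{Dom}(A_s)\cap\cH_\lambda$, $v\in\cH_\mu$ and $\lambda\bar{\mu}\neq1$: for genuine eigenvectors this is immediate since then $\langle A_su,v\rangle=\lambda\bar{\mu}\,\langle A_su,v\rangle$, and the general case follows by induction on the heights of $u$ and $v$ in the nilpotent filtrations of $\cH_\lambda$ and $\cH_\mu$ (writing $gu=\lambda u+(g-\lambda)u$ and using that $g-\lambda$ preserves $\mathrm{Dom}(A_s)\cap\cH_\lambda$), after which a density argument removes the domain restriction on $v$. In particular $A_s$ maps $\mathrm{Dom}(A_s)\cap\cH_\lambda$ into the orthogonal complement of $\bigoplus_{\mu\neq\bar{\lambda}^{-1}}\cH_\mu$.

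Next, let $V=\bigoplus_{\lambda\in\sigma(g)}\cH_\lambda$ be the \emph{orthogonal} Hilbert-space direct sum of the closed subspaces $\cH_\lambda$, and let $J\colon V\to\cH$, $(u_\lambda)_\lambda\mapsto\sum_\lambda u_\lambda$, be the summation map. As $\sigma(g)$ is finite and $\cH=\sum_\lambda\cH_\lambda$ is an algebraic direct sum, $J$ is a bounded bijection, so $J\in\cL^*(V,\cH)$, and it does not depend on $s$; by Theorem~\ref{th:coginva}, $J^*A_sJ\in C([0,1],\mathcal{FS}(V))$ and $sf(A_s)=sf(J^*A_sJ)$. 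The paragraph above shows that the $(\lambda,\mu)$ block of $J^*A_sJ$ vanishes unless $\mu=\bar{\lambda}^{-1}$, and (since the domain splits compatibly) that $J^*A_sJ$ is the orthogonal direct sum of the blocks attached to the singletons $\{\lambda\}$ with $\lambda\in\sigma(g)\cap\mathbb{U}$ and of those attached to the conjugate pairs $\{\lambda,\bar{\lambda}^{-1}\}$ with $\lambda\notin\mathbb{U}$; each such block is self-adjoint Fredholm, being a direct summand of $J^*A_sJ$. Under $J$, the singleton block for $\lambda$ is the compression of $A_s$ by the orthogonal projection onto $F_\lambda=\cH_\lambda$, and the direct sum of the pair blocks is cogredient to the compression $A_s|_{\hat F}$; so, by Theorem~\ref{th:coginva} again, these have respectively the same spectral flow and nullity as $A_s|_{F_{\lambda_i}}$ and as $A_s|_{\hat F}$. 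The direct sum property of the spectral flow (see the Appendix) now gives
\begin{equation}\label{pp:step}
sf(A_s)=\sum_{i=1}^{j}sf\bigl(A_s|_{F_{\lambda_i}}\bigr)+sf\bigl(A_s|_{\hat F}\bigr).
\end{equation}

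It remains to evaluate $sf(A_s|_{\hat F})$. Write $\widetilde A_s$ for the direct sum of the pair blocks of $J^*A_sJ$ (so $sf(A_s|_{\hat F})=sf(\widetilde A_s)$, likewise with a minus sign, and $\dim\ker(A_s|_{\hat F})=\dim\ker\widetilde A_s$, by Theorem~\ref{th:coginva}). For $\lambda\notin\mathbb{U}$ one has $\lambda\bar{\lambda}=|\lambda|^2\neq1$, so the key claim (with $\mu=\lambda$) shows that $\langle A_s\cdot,\cdot\rangle$ vanishes on $\cH_\lambda$ and on $\cH_{\bar{\lambda}^{-1}}$ separately; hence $\widetilde A_s$ is, on each conjugate pair $\cH_\lambda\oplus\cH_{\bar{\lambda}^{-1}}$ (orthogonal inside $V$), off-diagonal. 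Therefore the self-adjoint unitary $U$ equal to $+\mathrm{Id}$ on each $\cH_\lambda$ and $-\mathrm{Id}$ on each $\cH_{\bar{\lambda}^{-1}}$ (one representative chosen per pair) satisfies $U^*\widetilde A_sU=-\widetilde A_s$, and Theorem~\ref{th:coginva} with the constant cogredient $M_s\equiv U$ gives $sf(\widetilde A_s)=sf(-\widetilde A_s)$. Combining this with the normalization identity $sf(B_s)+sf(-B_s)=\dim\ker B_1-\dim\ker B_0$, valid for every $B_s\in C([0,1],\mathcal{FS})$ (it records the chosen convention at the endpoints and is immediate from the definition of the spectral flow via the spectral projections of $B_s$ near $0$), applied to $B_s=\widetilde A_s$, we obtain $sf(A_s|_{\hat F})=sf(\widetilde A_s)=\tfrac12\bigl(\dim\ker(A_1|_{\hat F})-\dim\ker(A_0|_{\hat F})\bigr)$, and substituting into \eqref{pp:step} gives \eqref{decomf}.

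The step I expect to be the main obstacle is the first one: propagating the invariance \eqref{coinvariant} from genuine eigenvectors to the entire generalized eigenspaces of the possibly non-semisimple operator $g$, carried out with enough care about the domain of the unbounded operator $A_s$ that the block decomposition of $J^*A_sJ$ is a bona fide orthogonal direct sum of closed self-adjoint operators, and not merely a decomposition of the associated sesquilinear form. Everything else is a routine application of Theorem~\ref{th:coginva} and of the standard properties of the spectral flow collected in the Appendix.
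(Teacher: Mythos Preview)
Your proposal is correct and follows essentially the same route as the paper: the $A$-orthogonality via induction on nilpotent heights is the paper's Lemma~\ref{lm:A_orthogonal}, your summation map $J$ is the paper's $M$ in Proposition~\ref{pro:decomposition}, and the reflection $U$ with the identity $sf(B_s)+sf(-B_s)=\dim\ker B_1-\dim\ker B_0$ is exactly the paper's Lemma~\ref{lm:sf_notin_circle}. The only cosmetic difference is that you split into the individual $\cH_\lambda$ and then regroup into pairs, whereas the paper works directly with the $F_\lambda$.
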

   In \cite{HS09}, by assume $g$ is unitary and $\sigma(g)$  is finite,   Hu-Sun proved the decomposition formula 
   \bea  sf(A_s)=sf(A_s|_{\ker(g-\lambda_1})+\cdots+sf(A_s|_{\ker(g-\lambda_j)})  \label{decomfHS}  \eea  under the condition \bea A_sg=gA_s.  \eea
   Obviously, we give a generalization of \eqref{decomfHS}.   In fact, there is a significant difference is we are not assume $g$ is unitary  in Theorem \ref{th:decom}, hence the subspace $\cH_\lambda$ is not orthogonal.  To overcome this difficulty, we develop a new technique (Lemma \ref{abstract_decomposition})  to prove the equality of spectral flow.

As an applications of  Theorem \ref{th:decom}, we give generalization for the Bott-type iteration formula which is a powerful tool in study the multiplicity and stability of periodic orbits  in Hamiltonian systems. 
In 1956, Bott got his celebrated iteration formula for the Morse index of closed geodesics \cite{Bot56}, and it was generalized by \cite{BTZ82, CD77, CZ84, Eke90}. The precise iteration formula of the general Hamiltonian system was established by Long \cite{Lon99,Lon02}. In fact, the iteration could be regarded as a unitary group action.  Motivated by the symmetry orbits in $n$-body problem \cite{FT04},  Hu-Sun \cite{HS09}  use this opinion to give generalization of Bott-type iteration formula to the system under a circle-type symmetry or brake symmetry group action, and  prove  the stability of Figure Eight orbit \cite{CM00}.  The case  of the brake symmetry was deeply studied  in \cite{LZZ06,LZ14a,LZ14b,HPY17b}.

Based on Theorem \ref{th:decom}, we prove  the  Bott-type iteration formula which cover all the previous cases and moreover give some new generalizations. Our generalized formula could be applied to the closed geodesics on Semi-Riemanian manifold and  heteroclinic orbits with brake symmetry. 

Now we consider the linear Hamiltonian system \bea  \dot{x}(t)=JB(t)x(t),\quad  t\in\mathcal{I}, \label{ham1}\eea
where  $J=\begin{pmatrix}
0 & -I_n\\
I_n & 0
\end{pmatrix}$,  $\mathcal{I}\subset \mathbb{R}$ is a connected subinterval,  $B(t)\in C(\mathcal{I}, \mathcal{S}(\mathbb{R}^{2n}))$.  
In the case $\mathcal{I}$ is finite, the  boundary conditions is given by the Lagrangian subspaces. Let $(\mathbb{C}^{2n},\omega)$ be the standard symplectic 
space with $\omega(x,y)=(Jx,y)$.  A  Lagrangian subspace $V$ is a $n$-dimensional subspace with  $\omega|_V=0$. We denote the set of Lagrangian subspace by $Lag(2n)$.    It is obvious $(\mathbb{C}^{2n}\oplus\mathbb{C}^{2n},-\omega\oplus\omega)$ is a $4n$-dimensional symplectic space, then for $\mathcal{I}=[a,b]$, the boundary condition is given by 
\bea (x(a),x(b))\in \Lambda\in Lag(4n). \eea 
In the case $\mathcal{I}=\mathbb{R}$, we always assume $B(\pm\infty)=\lim_{t\to\pm\infty}B(t)$ exist and $JB(\pm\infty)$ is hyperbolic, that is \bea \sigma(JB(\pm\infty))\cap i\R=\emptyset. \eea
Let  $\cH=L^2(\mathcal{I}, \mathbb{C}^{2n})$   and  $E$ is $W^{1,2}(\mathcal{I}, \mathbb{C}^{2n})$  which satisfied some boundary conditions.  We denote by $$
A:= -J\frac{d}{dt} : E \subset \cH\to \cH.
$$
and $B\in\mathcal{\cH}$  be the multiplicity operator of $B(t)$. Let  $A_s=A-sB$ for $s\in\R$, then $A_s\in\mathcal{FS}(\cH)$.
For $g\in\mathcal{\cH}$, $gE=E$  and satisfied \bea  g^*Ag=A, \quad  g^*Bg=B, \eea  then $g^*A_sg=A_s$. By construct  $g$, we get the spectral flow decomposition of $sf(A_s)$.   We list $6$-cases which common in applications of Hamiltonian systems. Our results generalization all the previous results, especially for the brake symmetry of heteroclinic orbits (Case $5$ and $6$), our result is new.  Please see Section 4 for the detail.

It is well known that spectral flow is equal to  Maslov index, and this is also true for the unbounded domain, see \cite{CLM94,ZL99,RS95, CH07, HP17} and reference therein.  The Maslov index is associated integer to a pair of continuous  path $f(t)=(L_1(t), L_2(t))$, $t\in\mathcal{I}$, in $Lag(2n)\times Lag(2n)$ \cite{CLM94}. From the decomposition of spectral flow, we get the decomposition of Mslov index, please refer Section 5 for the detail.  For reader's convenience, we give a brief describe for the Maslov index and spectral flow in the Appendix.

This paper is organized as follows.  We proved Theorem \ref{th:coginva}  in Section 2 and Theorem \ref{th:decom} in Section 3. In Section 4, we list $6$-cases of decompositions in Hamiltonian systems. In Section 5, we give the some case of the Bott-type iteration formulas. At last, we  briefly review the basic property  of spectral flow and Maslov index in the Section 6.

\section{Spectral flow is  preserved under cogredient } 

Let $V$ be a closed space of $\cH$, and
 $P_V$ be the orthogonal projection from $\cH$ to $V$. 
For  $A\in\cal{C}(\cH)$, we denote the operator $P_VAP_V:V\to V$ by $A_V$. Then $A_V\in \cal{C}(V)$.  Obviously, if  $A\in \cal{S}(\cH)$ then $A_V\in \cal{S}(V)$.

\begin{defi}
Let $A:[a,b]\rightarrow \cal{FS} (\cH)$ be a continuous  curve. We call $A(t)$ is a positive curve if
$\{t, \ker A(t)\neq0\}$ is a distinct set and
 \bea sf(A(t); [0,1])=\sum_{a< t\leq b} \dim \ker(A(t)).\eea
 \end{defi}
Let $A\in \cal{FS}(\cH)$ and $B\in \cal{L}(\cH)\cap \cal{S}(\cH)$, 
then  $A+tB\in \cal{FS}(\cH)$ with $t\in \R$.
Note that  it is positive if  $B|_{\ker(A+tB)>0}$ for any
$t\in \{t|\ker(A+tB)\neq 0\}$.
For example, $A+t I$ is a positive curve with $A\in \cal{FS}(H)$ for $t\in\R$.

Let $S\subset\cal{FS}(\cH)$ be a path connected  subset.
We assume there exist $K\in\cal{L}(\cH)$  such that $(Kx,x)>0,\, \forall x\in \cH$ and for any $A\in S$,  there is a neighborhood $U$ of $A$,  and $\epsilon>0$ such that $B+tK\in S, \,  t\in [0,\epsilon]$ for each $B\in U$.
Then $A+tK,\, t\in [0, \epsilon] $ is a positive curve in $\cal{FS}(\cH)$.
Let $\{\cH_k\}, 1\leq k\leq n$ be a family of Hilbert spaces and  $f_k: S\rightarrow \cal{FS}(\cH_k)$ be a family of continuous maps.

We assume that

(a)
For any $ A\in S$,  $f_k(A+tK),t\in [0,\epsilon]$ is a positive path in $\cal{FS}(\cH_k)$.

(b)
For any $A\in S$,   $\sum_{1\leq k\leq n}\dim\ker f_k(A)=\dim\ker A $.
\\
Then we have the following lemma.
\begin{lem}\label{abstract_decomposition}
Let $A \in C([0,1], \mathcal{FS}(\cH))$ and satisfied condition (a) and (b), 
we have
\begin{equation}
  sf(A(t); t\in [0,1])=\sum_{1\leq k\leq n} sf (f_k(A(t)); t\in [0,1]). \label{eq2.2}
\end{equation}
\end{lem}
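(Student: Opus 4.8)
The plan is to reduce the general statement to the case of positive perturbations, where everything is governed by dimensions of kernels. First I would invoke homotopy invariance and path additivity of spectral flow to reduce to a situation where the statement is essentially local. The idea is the standard one for proving identities of spectral flow: choose a partition $0=t_0<t_1<\cdots<t_N=1$ of $[0,1]$ fine enough that on each subinterval $[t_{i-1},t_i]$ the path $A(t)$ stays in a small neighborhood $U$ of $A(t_{i-1})$ of the type postulated before the lemma, and likewise each $f_k(A(t))$ stays in a corresponding small neighborhood. By path additivity, $sf(A(t);[0,1])=\sum_i sf(A(t);[t_{i-1},t_i])$ and similarly for each $f_k$, so it suffices to prove \eqref{eq2.2} on a single short subinterval.

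On such a short subinterval, the key trick is to compare the path with the ``shifted'' positive path $A(t)+sK$. Since $A(t)$ on $[t_{i-1},t_i]$ stays in $U$, for small $\epsilon>0$ the two-parameter family $A(t)+sK$, $(t,s)\in[t_{i-1},t_i]\times[0,\epsilon]$, lies in $S\subset\cal{FS}(\cH)$, and for $\epsilon$ generic (or after a further small perturbation) both $A(t_{i-1})+\epsilon K$ and $A(t_i)+\epsilon K$ are invertible. Running the homotopy around the boundary of this rectangle and using homotopy invariance gives
\begin{equation}
  sf(A(t);[t_{i-1},t_i]) = sf(A(t_i)+sK;s\in[0,\epsilon]) + sf(A(t)+\epsilon K;[t_{i-1},t_i]) - sf(A(t_{i-1})+sK;s\in[0,\epsilon]),
\end{equation}
and the middle term on the right vanishes because the endpoints are invertible and the path stays in a contractible neighborhood (for $\epsilon$ small the path $A(t)+\epsilon K$ has no kernel). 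Thus $sf(A(t);[t_{i-1},t_i]) = sf(A(t_{i-1})+sK;[0,\epsilon]) $ up to the boundary correction term $-sf(A(t_i)+sK;[0,\epsilon])$ with signs determined by whether $A(t_{i-1}),A(t_i)$ are themselves invertible; since by (a) $A(t)+sK$ is a positive curve, these terms are just $\dim\ker A(t_{i-1})$, $\dim\ker A(t_i)$ type quantities. Exactly the same computation applies verbatim to each $f_k$, using hypothesis (a) that $f_k(A+tK)$ is positive.

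Now I assemble the pieces. The contribution of the subinterval $[t_{i-1},t_i]$ to $sf(A(t);[0,1])$ is expressed purely in terms of $\dim\ker A(t_{i-1})$ and $\dim\ker A(t_i)$ (via the positive comparison paths), and the contribution to $\sum_k sf(f_k(A(t));[0,1])$ is expressed purely in terms of $\dim\ker f_k(A(t_{i-1}))$ and $\dim\ker f_k(A(t_i))$. Hypothesis (b), $\sum_k \dim\ker f_k(A(t)) = \dim\ker A(t)$ for every $t$, forces these two expressions to agree term by term at $t_{i-1}$ and at $t_i$. Summing over $i$ and telescoping yields \eqref{eq2.2}. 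The main obstacle is the bookkeeping in the second paragraph: one must be careful about the convention for spectral flow at the endpoints (whether a kernel crossing at $t=0$ or $t=1$ is counted), and must verify that the comparison path $A(t)+\epsilon K$ genuinely has trivial kernel on the whole subinterval for $\epsilon$ small — this uses that $\{t:\ker A(t)\neq 0\}$ has the discreteness built into the definition of a positive curve, plus compactness, to choose $\epsilon$ uniformly. Once the endpoint conventions are pinned down consistently for $A$ and for all the $f_k$ simultaneously, the argument closes.
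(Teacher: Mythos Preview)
Your approach is essentially the paper's: reduce to a local statement by path additivity, deform via the rectangle $A(t)+sK$, use positivity to read off the side contributions as kernel dimensions, and invoke (b) to match them. The one point where your execution diverges from the paper is the order in which you choose parameters. You fix the partition first and then look for a small $\epsilon$ with $\ker(A(t)+\epsilon K)=0$ on the whole subinterval; your proposed justification appeals to discreteness of $\{t:\ker A(t)\neq 0\}$, but $A(t)$ itself is \emph{not} assumed to be a positive curve --- only $s\mapsto A(t_0)+sK$ is --- so that discreteness is in $s$, not $t$, and does not directly give what you need. The paper sidesteps this by reversing the order: at a given $t_0$ it first picks $\delta>0$ with $A(t_0)+\delta K$ invertible (possible since $s\mapsto A(t_0)+sK$ is positive, hence has discrete crossing set in $s$), and \emph{then} uses openness of invertibility in $\cal{FS}(\cH)$ to find $\delta_1>0$ with $A(t)+\delta K$ invertible for all $t\in[t_0-\delta_1,t_0+\delta_1]$; condition (b) then forces $\ker h_k(\delta,t)=0$ on that interval as well. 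With that adjustment your argument goes through verbatim.
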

\begin{proof}
Since the spectral flow satisfied the Path additivity property, 
we only need to prove \eqref{eq2.2} locally.
 Let $h_k(s,t)=f_k(A(t)+s K)$, $t\in [0,1],s\in [0,\epsilon]$, 
 then for any $t\in [0,1]$,  $h_k(s,t)$ is a positive curve with  $1\leq k\leq n$. Let $t_0\in [0,1]$, since $(Kx,x)>0$ for $x\in \ker A(t_0)$, there is $\delta>0$ such that
\begin{equation*}
\dim\ker(A(t_0)+\delta K)=0.
\end{equation*}
It follows that $\dim\ker (h_k(\delta,t_0))=0$ for $1\leq k\leq n$.
Note that $A(t_0)+\delta K$ is a Fredholm operator, 
so there is $\delta_1>0$ such that
\bea \dim \ker( A(t)+\delta K) =0,\,  \forall t\in [t_0-\delta_1,t_0+\delta_1].\nonumber\eea
It follows that $\dim\ker (h_k(\delta,t))=0$ for $t\in[t_0-\delta_1,t_0+\delta_1]$, $1\leq k\leq n$.
Then we have
\begin{equation*}
 \begin{cases}
sf(A(t)+\delta K, t\in [t_0-\delta_1,t_0+\delta_1])=0\\
 sf(h_k(\delta,t), t\in [t_0-\delta_1,t_0+\delta_1])=0
 \end{cases}.
\end{equation*}
By homotopy invariance of spectral flow, we have
 \bea 
 sf(A(t); t\in [t_0-\delta_1,t_0+\delta_1])=sf(A(t_0-\delta_1+sK); s\in [0,\delta])-sf(A(t_0+\delta_1+sK); s\in [0,\delta]) \nonumber \eea
and
\bea
   sf(h_k(0,t); t\in [t_0-\delta_1,t_0+\delta_1])=sf(h_k(s,t_0-\delta_1); s\in [0,\delta])-sf(h_k(s,t_0+\delta_1); s\in [0,\delta]).\nonumber
\eea
Note that $A(t_0\pm\delta_1)+sK$, $h_k(s,t_0\pm\delta_1),1\leq k\leq n$ are positive paths.
It follows that
\bea 
 sf(A(t_0\pm \delta_1)+sK; s\in [0,\delta])=\sum_{0<s\leq \delta }\dim\ker (A(t_0\pm \delta_1)+sK)\nonumber\\
 =\sum_{0<s\leq \delta } \sum_{1\leq k\leq n} \dim\ker (h_k(s,t_0\pm\delta_1)\nonumber)\\
 =\sum_{1\leq k\leq n}sf (h_k(s,t_0\pm\delta_1); s\in [0,\delta]). \nonumber\eea
 This complete the proof.
 \end{proof}
 
 Please note that Lemma \ref{abstract_decomposition} can be consider as a generalization of Direct sum property of spectral flow.

In the next, we will prove the spectral flow is invariant under the cogredient. The next Lemma is contained in \cite{FPS06}, but for reader's convenience, we give details  here.

\begin{lem}
Let	$E$ be the domain of $A\in\cal{FS}(\cH_2)$.
	If $M\in \cal{L}(\cH_1,\cH_2)$  is invertible,   then $M^*AM\in \cal{FS}(\cH_1)$ with domain $M^{-1}(E)$.
\end{lem}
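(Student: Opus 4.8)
The plan is to verify, in turn, the three conditions that define membership in $\cal{FS}(\cH_1)$: dense domain, self-adjointness, and the Fredholm property, transporting each along the bounded isomorphisms $M$ and $M^*$.

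For the domain, the natural domain of $M^*AM$ is $\{x\in\cH_1:Mx\in E\}=M^{-1}(E)$, and since $M^{-1}\in\cal{L}(\cH_2,\cH_1)$ is a homeomorphism with $E$ dense in $\cH_2$, this set is dense in $\cH_1$. I would also note that $AM$ on $M^{-1}(E)$ is closed---if $x_n\to x$ and $AMx_n\to z$ then $Mx_n\to Mx$ and closedness of $A$ gives $Mx\in E$, $AMx=z$---and that left composition with the bounded invertible $M^*$ preserves closedness, so $M^*AM$ is closed and densely defined (closedness also follows a posteriori from self-adjointness).

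For self-adjointness I would invoke the adjoint identities for a product with a bounded invertible factor $B$ and a closed densely defined $T$: $(BT)^*=T^*B^*$ and $(TB)^*=B^*T^*$, the second obtained from the first by writing $T=(TB)B^{-1}$---this is exactly the step where invertibility of $B$, not merely boundedness, is used to upgrade the inclusion $(TB)^*\supseteq B^*T^*$ to an equality. Applying these with $T=AM$, $B=M^*$, then with $T=A$, $B=M$, and using $A^*=A$ gives
\[
(M^*AM)^*=(AM)^*(M^*)^*=(AM)^*M=(M^*A^*)M=M^*AM,
\]
and a short check that the right-hand operator has domain $M^{-1}(E)$ makes this an equality of operators, so $M^*AM$ is self-adjoint.

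For the Fredholm property: since $M^*$ is injective, $M^*AMx=0\iff AMx=0\iff Mx\in\ker A$, hence $\ker(M^*AM)=M^{-1}(\ker A)$ is finite-dimensional; and as $x$ ranges over $M^{-1}(E)$ the vector $Mx$ ranges over $E$, so $\mathrm{ran}(M^*AM)=M^*(\mathrm{ran}\,A)$, which is closed (image of a closed subspace under the homeomorphism $M^*$) and of finite codimension equal to $\codim\mathrm{ran}\,A$ in $\cH_1$. Thus $M^*AM\in\cal{FS}(\cH_1)$ with domain $M^{-1}(E)$ (and, incidentally, the same index $0$ as $A$). The one place needing genuine care is the bookkeeping of domains in the adjoint computation, where invertibility of $M$ is what turns the general inclusion into an equality; everything else is a direct transfer of structure through the isomorphism $M$.
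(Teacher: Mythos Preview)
Your proof is correct and follows essentially the same approach as the paper's: both identify $\ker(M^*AM)=M^{-1}(\ker A)$ and $\mathrm{ran}(M^*AM)=M^*(\mathrm{ran}\,A)$ for the Fredholm property, and both verify self-adjointness with domain $M^{-1}(E)$. The only minor difference is stylistic---you invoke the abstract adjoint identities $(BT)^*=T^*B^*$ and $(TB)^*=B^*T^*$ for bounded invertible $B$, whereas the paper checks directly that $(AMx,My)=(Mx,AMy)$ holds for all $x\in M^{-1}E$ precisely when $y\in M^{-1}E$; your version is a bit more careful about spelling out denseness and closedness, which the paper leaves implicit.
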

\begin{proof}
Since $A\in\cal{FS}(\cH_2)$ with domain $E$, we have $\dim \ker A,\, \dim (\cH_2/\image A) <+\infty$.
Since $M$ is invertible, we have $\ker(M^*AM)= \ker(AM)=M^{-1}\ker A$.
Then $M^{-1}$ induce an isomorphism from $\ker A$ to $\ker (M^*AM)$.
Note that $\image(M^*A M)=M^*\image(A)$.
Then $M^*$ induce an isomorphism from $\cH_2/\image(A)$ to $\cH_1/\image(M^*AM)$.
So $M^*AM$ is a Fredholm operator.
 
Since $A \in \cal{FS}(\cH_2)$ with domain $E$, we see that for each $x\in M^{-1}$, $(AMx,My)=(Mx,AMy)$ if and only if $y\in M^{-1}E$.
It follows that $(M^*AM)^*=M^*AM$ with domain $M^{-1}E$.  
Then we can conclude that  $M^*AM\in \cal{FS}(\cH_1)$.
\end{proof}

Recall that the gap topology can be induced by the gap distance $\hat{\delta}$.
Let $X$ be a Banach space.
Let $M,N$ be two closed linear  spaces of  $X$.
Denote by $S_M$ the unit sphere of $M$.
Then gap distance is defined as 
\begin{equation}
\hat{\delta}(M,N)=\max \{\delta(M,N),\delta(N,M)\},
\end{equation}  where
\begin{equation*}
\delta\{M,N\}:=\begin{cases}
\sup_{u\in S_M}\Dist(u,N), &\text{if} ~ M\neq \{0\}\\
0, & \text{if}~ M=\{0\}
\end{cases}.
\end{equation*} 

The gap distance have the following properties.
\begin{lem}\label{lm:gap_distance_compare}
	Let $X,Y$ be two Hilbert spaces.
	Let $M,N$ be two closed linear subspaces of $X$.
	Let $P,Q\in \cal{L}^*(X,Y)$. 
	Then 
	$\hat\delta(PM,QN)\le \hat\delta(M,N)\max\{\|P\|,\|Q\|\}+\|P-Q\|\max\{\|P^{-1}\|,\|Q^{-1}\|\}$.
\end{lem}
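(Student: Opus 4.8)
The plan is to establish the one-sided estimate for $\delta(PM,QN)$ and then conclude by symmetry. Since $P,Q\in\cal{L}^*(X,Y)$ are bounded with bounded inverses, they are homeomorphisms of $X$ onto $Y$, so $PM$ and $QN$ are closed subspaces of $Y$ and $\hat\delta(PM,QN)$ is well defined. The right-hand side of the claimed inequality is unchanged when the pair $(P,M)$ is interchanged with $(Q,N)$, while $\hat\delta(PM,QN)=\max\{\delta(PM,QN),\delta(QN,PM)\}$; hence it is enough to bound $\delta(PM,QN)$ by the right-hand side, the estimate for $\delta(QN,PM)$ then following verbatim with the roles of $(P,M)$ and $(Q,N)$ exchanged.

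To bound $\delta(PM,QN)$, take any $u\in PM$ with $\|u\|=1$ and write $u=Px$ with $x\in M$. As $u\neq 0$ and $P$ is invertible, $x\neq 0$, and $\|x\|=\|P^{-1}u\|\le\|P^{-1}\|$. Let $y$ be the orthogonal projection of $x$ onto the closed subspace $N$; then $y\in N$, $\|y\|\le\|x\|$, and, using $x/\|x\|\in S_M$ together with $\delta(M,N)\le\hat\delta(M,N)$, one has $\|x-y\|=\Dist(x,N)=\|x\|\,\Dist(x/\|x\|,N)\le\|x\|\,\hat\delta(M,N)$. Since $Qy\in QN$, we have $\Dist(u,QN)\le\|u-Qy\|=\|Px-Qy\|$, and the decomposition $Px-Qy=P(x-y)+(P-Q)y$ (or, equally, $(P-Q)x+Q(x-y)$) gives $\|Px-Qy\|\le\|P\|\,\|x-y\|+\|P-Q\|\,\|y\|\le\bigl(\|P\|\,\hat\delta(M,N)+\|P-Q\|\bigr)\|x\|$. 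Combining with $\|x\|\le\|P^{-1}\|$ and taking the supremum over $u\in S_{PM}$ bounds $\delta(PM,QN)$; the symmetric computation bounds $\delta(QN,PM)$, and taking the maximum of the two yields the claimed estimate.

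The argument is elementary and I do not anticipate a genuine obstacle; the points that want a little care are the following. First, one must check that $PM$ and $QN$ are closed so that the gap distance is defined, which is precisely where invertibility of $P$ and $Q$ enters. Second, the homogeneity identity $\Dist(x,N)=\|x\|\,\Dist(x/\|x\|,N)$ is what converts the unit-sphere definition of $\delta(M,N)$ into an estimate valid for every $x\in M$. Third, the competitor $Qy\in QN$ is chosen via the orthogonal projection $y$ of $x$ onto $N$, which simultaneously controls $\|x-y\|$ by $\Dist(x,N)$ and keeps $\|y\|\le\|x\|$. The only real bookkeeping is to keep the two contributions apart — the $\hat\delta(M,N)$-term arising from the near-invariance of $N$ under the projection, and the $\|P-Q\|$-term arising from replacing $P$ by $Q$ — and to bound $\|x\|$ by $\|P^{-1}\|$ uniformly over the unit sphere of $PM$.
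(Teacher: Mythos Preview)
Your approach is essentially identical to the paper's: pick a unit vector in $PM$, pull it back to $M$ via $P^{-1}$, project onto $N$ to obtain a competitor, push forward via $Q$, and estimate with the triangle inequality. The paper writes $\|x-Qy\|\le\|x-Py\|+\|Py-Qy\|$ while you write $Px-Qy=P(x-y)+(P-Q)y$, but these are the same decomposition.

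There is, however, a genuine gap in the final bookkeeping, and it is shared by the paper's own proof. Your chain of inequalities gives
\[
\Dist(u,QN)\le\bigl(\|P\|\,\hat\delta(M,N)+\|P-Q\|\bigr)\|x\|\le\bigl(\|P\|\,\hat\delta(M,N)+\|P-Q\|\bigr)\|P^{-1}\|,
\]
so that $\delta(PM,QN)\le\|P\|\,\|P^{-1}\|\,\hat\delta(M,N)+\|P-Q\|\,\|P^{-1}\|$; after the symmetric estimate the coefficient of $\hat\delta(M,N)$ becomes $\max\{\|P\|\,\|P^{-1}\|,\|Q\|\,\|Q^{-1}\|\}$, not $\max\{\|P\|,\|Q\|\}$ as the lemma asserts. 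The paper conceals the same issue by writing $\Dist(P^{-1}x,N)\le\delta(M,N)$, which is unjustified since $P^{-1}x$ need not lie on $S_M$. In fact the inequality as stated is false: take $X=Y=\mathbb{R}$, $M=\mathbb{R}$, $N=\{0\}$, and $P=Q=\tfrac12 I$; then $\hat\delta(PM,QN)=\hat\delta(M,N)=1$ while the right-hand side equals $\tfrac12$. The corrected bound carrying the extra condition-number factor on the first term is what both arguments actually establish, and it is entirely adequate for the lemma's sole use in the paper, namely proving continuity of $s\mapsto M_s^*A_sM_s$.
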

\begin{proof}
	Without loss of generality, we assume that $M,N\neq \{0\}$, and
let $d_1=\hat\delta(M,N)$.
Let $x\in PM$ with $\|x\|=1$, we choose
 $y\in N$ such that $\|P^{-1}x-y\|=\Dist(P^{-1}x,N)$, 
then we have $\|y\|\leq \|P^{-1}x\|\leq \|P^{-1}\|$.
Note that 
\begin{eqnarray*}
\|x-Qy\|\leq \|x-Py\|+\|Qy-Py\| &&\leq \|P\|\|P^{-1}x-y\|+\|Q-P\|\|y\|\\
 &&\leq \|P\|\Dist(P^{-1}x,N)+\|Q-P\|\|P^{-1}\|\\
 &&\leq \|P\|\delta(M,N)+\|Q-P\|\|P^{-1}\|.
\end{eqnarray*}
It follows that
$\delta(PM,QN)\leq \|P\|\delta(M,N)+\|Q-P\|\|P^{-1}\|$.
Similarly, we have
$\delta(QN,PM)\leq \|Q\|\delta(N,M)+\|Q-P\|\|Q^{-1}\|$.
This conclude the proof.
\end{proof}

\begin{lem}\label{lem: continuous}   Suppose $M_s\in C([0,1], \cal{L}^*(\cH_1,\cH_2))$,  $A_s\in C([0,1],\cal{FS}(\cH_2))$, then  $M_s^*A_sM_s\in C([0,1],\cal{FS}(\cH_1)) $.  \end{lem}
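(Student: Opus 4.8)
The plan is to reduce continuity in the gap topology on $\cal{FS}(\cH_1)$ to the gap-distance estimate just established in Lemma \ref{lm:gap_distance_compare}, applied to the graphs of the operators. Recall that for a closed operator $T\in\cal{C}(\cH)$ with graph $\Graph(T)=\{(x,Tx): x\in \dom T\}\subset\cH\oplus\cH$, the gap topology is precisely the topology induced by $\hat\delta(\Graph(T),\Graph(T'))$. So fixing $s_0\in[0,1]$, I want to show $\hat\delta\bigl(\Graph(M_s^*A_sM_s),\Graph(M_{s_0}^*A_{s_0}M_{s_0})\bigr)\to 0$ as $s\to s_0$.

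The key algebraic observation is that the graph of $M_s^*A_sM_s$ is the image of the graph of $A_s$ under an explicit bounded invertible operator on $\cH_1\oplus\cH_2$-type spaces. Precisely, if $x\in\dom(M_s^*A_sM_s)=M_s^{-1}(E_s)$ where $E_s=\dom A_s$, then $(x, M_s^*A_sM_s x)$ corresponds to $(M_sx, A_s M_s x)\in\Graph(A_s)$ via the map $(u,v)\mapsto (M_s^{-1}u, M_s^* v)$. Hence
\begin{equation*}
\Graph(M_s^*A_sM_s)=\Phi_s\bigl(\Graph(A_s)\bigr),\qquad \Phi_s:=\begin{pmatrix} M_s^{-1} & 0\\ 0 & M_s^*\end{pmatrix}\in\cal{L}^*(\cH_2\oplus\cH_2,\ \cH_1\oplus\cH_1).
\end{equation*}
First I would verify this identity carefully (both inclusions, using invertibility of $M_s$), and note $\Phi_s\in C([0,1],\cal{L}^*)$ with $\Phi_s^{-1}=\mathrm{diag}(M_s,(M_s^*)^{-1})$, so $\|\Phi_s\|$, $\|\Phi_s^{-1}\|$ are locally bounded and $\|\Phi_s-\Phi_{s_0}\|\to 0$ by continuity of $s\mapsto M_s$ (bounded invertible operators depend continuously on their entries, and inversion is continuous on $\cal{L}^*$).

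Then I apply Lemma \ref{lm:gap_distance_compare} with $X=\cH_2\oplus\cH_2$, $Y=\cH_1\oplus\cH_1$, $M=\Graph(A_s)$, $N=\Graph(A_{s_0})$, $P=\Phi_s$, $Q=\Phi_{s_0}$:
\begin{equation*}
\hat\delta\bigl(\Graph(M_s^*A_sM_s),\Graph(M_{s_0}^*A_{s_0}M_{s_0})\bigr)\le \hat\delta(\Graph(A_s),\Graph(A_{s_0}))\max\{\|\Phi_s\|,\|\Phi_{s_0}\|\}+\|\Phi_s-\Phi_{s_0}\|\max\{\|\Phi_s^{-1}\|,\|\Phi_{s_0}^{-1}\|\}.
\end{equation*}
Since $A_s\in C([0,1],\cal{FS}(\cH_2))$, the first term tends to $0$ as $s\to s_0$; by the previous paragraph the second term also tends to $0$. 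Combined with the preceding lemma (which guarantees $M_s^*A_sM_s\in\cal{FS}(\cH_1)$ for each $s$, so the path genuinely lies in $\cal{FS}(\cH_1)$), this gives continuity at $s_0$, and since $s_0$ was arbitrary, $M_s^*A_sM_s\in C([0,1],\cal{FS}(\cH_1))$.

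I expect the only real subtlety to be the graph identity $\Graph(M_s^*A_sM_s)=\Phi_s(\Graph(A_s))$ together with the matching of domains $\dom(M_s^*A_sM_s)=M_s^{-1}(E_s)$ — but this is exactly what the computation in the proof of the preceding lemma already supplies, so it is routine. Everything else is a direct invocation of Lemma \ref{lm:gap_distance_compare} and elementary continuity of operator inversion; there is no genuine obstacle.
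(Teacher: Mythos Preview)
Your proposal is correct and follows essentially the same route as the paper: both express $\Graph(M_s^*A_sM_s)$ as the image of $\Graph(A_s)$ under the block-diagonal invertible operator $\mathrm{diag}(M_s^{-1},M_s^*)$ (the paper calls it $Q_s$), then feed this directly into Lemma~\ref{lm:gap_distance_compare} and use continuity of $s\mapsto M_s$ and $s\mapsto A_s$ to conclude. The only cosmetic difference is that the paper bounds $\|Q_s\|,\|Q_s^{-1}\|$ uniformly over $[0,1]$ rather than locally near $s_0$, which is immaterial.
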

\begin{proof} 
We only need to show that $M_s^*A_sM_s$ is a continuous curve with the gap topology.
Let $E_s$ be the domain of $A_s$. 
Note that
$$
\Graph(M_s^*A_sM_s)=\{(M_s^* A_s x,M_s^{-1}x)|x\in E_s\}.
$$
Let $Q_s:\cH_2\oplus \cH_2 \to \cH_1\oplus \cH_1$ be 
$\begin{pmatrix}
	M_s^* & 0\\
	0& M_s^{-1}\\
\end{pmatrix} $, 
then $Q_s \in  C([0,1], \cal{L}^*(\cH_2\oplus \cH_2,\cH_1\oplus\cH_1))$, and
we also have  $\Graph(M_s^*A_sM_s)=Q_s\Graph(A_s)$.
Since  $\|Q_s\|$ and $\|Q_s^{-1}\|$ are continuous functions on $[0,1]$,
 we have $\|Q_s\|>0$, $\|Q_s^{-1}\|>0$ for $s\in [0,1]$.
Let $C_1=\sup(\|Q_s\|)$, $C_2=\sup(\|Q_s^{-1}\|)$.
For  $s_0,s\in [0,1]$, 
by Lemma \ref{lm:gap_distance_compare}, we have
\begin{eqnarray*}
\hat\delta(\Graph(M_{s_0}^*A_{s_0}M_{s_0},M_{s}^*A_{s}M_{s})&&=
\hat\delta(Q_{s_0}\Graph(A_{s_0}),Q_s\Graph(A_s))\\
&&\leq C_1\hat\delta(A_{s_0},A_{s})+C_2\|Q_{s}-Q_{s_0}\|.
\end{eqnarray*}
By the continuity of $A_s$ and $Q_s$, we see that 
for any $\epsilon >0$ there is $\delta_1>0$ such that for any $s\in (s_0-\epsilon,s_0-\epsilon)$, we have
$\hat\delta(\Graph(A_{s_0}),\Graph(A_s))<\epsilon/(2C_1)$ and
$\|Q_s-Q_{s_0}\|<\epsilon/(2C_2)$.
Then we have $\hat\delta(\Graph(M_{s_0}^*A_{s_0}M_{s_0},M_{s}^*A_{s}M_{s})<\epsilon.$
This complete the proof.
\end{proof}

Now we give the proof of  Theorem \ref{th:coginva}.
 \begin{proof}[Proof of Theorem \ref{th:coginva}]
 Please note that \eqref{th1f1} is from  Lemma \ref{lem: continuous}. 
 We first prove the case $M_s\equiv M$.
 Let $S=\cal{FS}(\cH_2)$, $K=I$,  $f(A)=M^*AM$.     Please note that
 $$\dim\ker (M^*AM) =\dim (M^{-1}\ker A)=\dim\ker A$$ for each
$A\in \cal{FS}(\cH_2) $. 
Furthermore, we have $\frac{d}{dt} M^*(A+tI)M|=M^*M>0$, 
so $M^*(A+tI)M$ is a positive curve.
Then by Lemma \ref{abstract_decomposition}, we have
$$sf(A_s; s\in[a,b])=sf(M^*A_sM; s\in[a,b])\quad  for \quad  M\in \cal{L}^*(\cH_1,\cH_2).$$
Now we consider the two family $M_{a+t(s-a)}^*A_sM_{a+t(s-a)}$, $(t,s)\in [0,1]\times[a, b]$. By the homotopy invariant property of spectral flow, we have
\begin{equation*}
sf(M_a^*A_sM_a)=sf(M_s^*A_s M_s)-sf(M_{a+t(b-a)}^*A_bM_{a+t(b-a)}).
\end{equation*}
Note that $\dim \ker M_{a+t(b-a)}^*A_bM_{a+t(b-a)}$ is a constant 
 which implies  $sf(M_{a+t(b-a)}^*A_bM_{a+t(b-a)})=0$.
It follows that 
 $$sf(M_a^*A_sM_a)=sf(M_s^*A_s M_s).$$ This complete the proof.
\end{proof}

As an example, we consider the one parameter family of  linear Hamiltonian systems
\begin{eqnarray} \dot{z}(t)= J B_s(t)z(t), (s,t)\in[0,1]\times[0,T],   \label{ham2.1}
  \end{eqnarray}
where  $B(t)\in C([0,1]\times[0,T], \cal{S}(\R^{2n}))$. 
    The boundary condition is given by 
    \bea (x_s(0),x_s(T))\in \Lambda_s \in Lag(4n), \nonumber \eea
    where we assume $\Lambda_s$ is continuous depend on $s$.
  
   Let $A_s=-J\d|_{E(\Lambda_s)}$ which is path of  self adjoint operator on                              
  $\cH:=L^2([0,T],\mathbb C^{2n})$with domain $$E(\Lambda_s)=\{x\in W^{1,2}([0,T], \mathbb C^{2n}), (x(0),x(T))\in\Lambda_s \}.$$      
  We define  $B_s$  by $(B_sx)(t)=B_s(t)x(t)$.  It is well known that 
  $A_s, A_s-B_s\in \cal{FS}(\cH)$ with domain $E_s$.  Let $\ga_s(t)$ be the fundamental solution of \eqref{ham2.1}, i.e. \bea \dot{\ga}_s(t)=JB_s(t)\ga_s(t),\eea
  then $$\ga_s(t)\in\Sp(2n):=\{P\in \cal{L}^*(\R^{2n}), P^*JP=J  \}, $$   which implies $Gr(\ga_s(T))\in Lag(4n ) $. 
   The following formula  which gives the  relation of spectral flow and Maslov index (please refer Theorem \ref{th:6.1}) 
  \bea  -sf(A_s-B_s)=\mu(\Lambda_s, Gr(\gamma_s(T))  ). \nonumber  \eea
 
  Let $P_s(t)\in C^1([0,1]\times[0,T], \Sp(2n))$, then   $P_s\in C^1([0,1], \cal{L}^*(\cH))$, hence $(P^{*}_s)^{-1}(A_s-B_s)P^{-1}_s\in\cal{FS}(\cH)$ with domain
   \bea  P_sE_s= \{x\in W^{1,2}([0,T], \mathbb C^{2n}), (x(0),x(T))\in\hat{P}_s(T)\Lambda_s \} ,    \nonumber  \eea where
 $\hat{P}_s(t)=diag(I_n, P_s(t))$. 
    Direct compute show that \bea (P^{*}_s)^{-1}(-J\d|_{E(\hat{P}_s(T)\Lambda_s)}-B_s)P^{-1}_s=A_s-\hat{B}_s, \nonumber \eea
  where $\hat{B}_s(t)= -J\dot{P}_s(t)P^{-1}_s(t)+(P^{*}_s(t))^{-1}B(t)P^{-1}_s(t) $.    From Theorem \ref{th:coginva}, we have 
  \bea  sf(-J\d|_{E(\hat{P}_s(T)\Lambda_s)}-\hat{B}_s)= sf((P^{*}_s)^{-1}(A_s-B_s)P^{-1}_s) . \label{coninva1} \eea
From \eqref{th:morma}, we can express the left of \eqref{coninva1} as Maslov index. In fact, 
the fundamental solution is $P_s(t)\ga_s(t)$, and the boundary conditions is given by $(\hat{P}_s(T)\Lambda_s $. Hence we have 
 \bea  sf(-J\d|_{E(\hat{P}_s(T)\Lambda_s)}-\hat{B}_s)= \mu(\hat{P}_s(T)\Lambda_s, \hat{P}_s(T)Gr(\gamma_s(T))  ). \nonumber \eea
Formula  \eqref{coninva1}  implies that 
  \bea \mu(\Lambda_s, Gr(\gamma_s(T))  ) =\mu(\hat{P}_s(T)\Lambda_s, \hat{P}_s(T)Gr(\gamma_s(T))  ),  \nonumber \eea
 which is just the  symplectic invariant property (\ref{adp1.4}) of Maslov index.

\section{Decomposition of Spectral flow under cogredient invariant  }

In this section, we will prove the decomposition formula for spectral flow. Suppose $g\in\cal{M}(\cH)$ with
$\sigma(g)=\{\lambda_1,\cdots,\lambda_n\}$, then  
\bea \cH=\sum_{1\leq i\leq n}\cH_{\lambda_i},  \label{3.1a}\eea
where  $\cH_{\lambda_i}:=\ker(g-\lambda_i)^m $ for large enough $m$.
Note that  $(\lambda-\lambda_1)^m$ and $(\lambda-\lambda_2)^m$ are coprime, 
then there are polynomials $p_1,p_2$ such that $p_1(\lambda)(\lambda-\lambda_1)^m+p_2(\lambda)(\lambda-\lambda_2)^m=1$.
For each $x\in \cH_{\lambda_1}\cap \cH_{\lambda_2}$,  we have
$$
x=p_1(g)(g-\lambda_1)^m x+p_2(g)(g-\lambda_2)^m x=0.
$$
Similarly  we have $\cH_{\lambda_i}\cap \cH_{\lambda_j}=0$ with $i\neq j$. So the decomposition \eqref{3.1a}  is a inner direct sum.
\begin{lem}
	$g\in\mathcal{M}(\cH)$  if and only if there exist $\lambda_1,\cdots, \lambda_n\in\mathbb{C}$ such that $\Pi_{i=1}^n (g-\lambda_i)^m=0$.
\end{lem}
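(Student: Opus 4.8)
The plan is to establish the two implications separately, both by elementary polynomial manipulations that exploit the commutativity of the operators $(g-\lambda_i)^m$. Throughout write $p(\lambda) = \prod_{i=1}^n(\lambda-\lambda_i)^m$.

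For the forward direction, assume $g \in \mathcal{M}(\cH)$. I would take an arbitrary $x \in \cH$ and use the direct sum decomposition $\cH = \bigoplus_i \cH_{\lambda_i}$ --- which is an \emph{inner} direct sum by the computation immediately preceding the lemma --- to write $x = \sum_i x_i$ with $x_i \in \cH_{\lambda_i} = \ker(g-\lambda_i)^m$. Since the factors of $p(g)$ all commute, one may reorder them so that $(g-\lambda_i)^m$ is applied first to $x_i$; hence $p(g)x_i = 0$ for each $i$, and summing gives $p(g)x = 0$. As $x$ is arbitrary, $p(g) = 0$.

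For the reverse direction, assume $p(g) = 0$; after merging repeated values among the $\lambda_i$ I may assume they are pairwise distinct. The first step is to show $\sigma(g) \subseteq \{\lambda_1,\ldots,\lambda_n\}$, so that the spectrum is finite: for $\mu$ outside this set, polynomial division gives $p(\lambda) = (\lambda-\mu)q(\lambda) + p(\mu)$ with $p(\mu) \neq 0$, and substituting $g$ exhibits $-p(\mu)^{-1}q(g)$ as a two-sided inverse of $g - \mu$. The second step is the decomposition. By coprimality of the polynomials $(\lambda-\lambda_i)^m$, $1 \le i \le n$, there are polynomials $q_1,\ldots,q_n$ with $\sum_i q_i(\lambda)\prod_{j\ne i}(\lambda-\lambda_j)^m = 1$ (a partial fraction identity). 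Setting $Q_i := q_i(g)\prod_{j\ne i}(g-\lambda_j)^m$, I get $\sum_i Q_i = I$ while $(g-\lambda_i)^m Q_i = q_i(g)\,p(g) = 0$, so $\image Q_i \subseteq \ker(g-\lambda_i)^m$; hence every $x$ satisfies $x = \sum_i Q_i x \in \sum_i \ker(g-\lambda_i)^m$, i.e.\ $\cH = \sum_{1\le i\le n}\ker(g-\lambda_i)^m$. Finally I would discard any $\lambda_i \notin \sigma(g)$ (for which $\ker(g-\lambda_i)^m = 0$, since $g-\lambda_i$ is invertible) and relabel so that the remaining eigenvalues are exactly $\sigma(g)$; this is the definition of $g \in \mathcal{M}(\cH)$.

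I do not expect a serious obstacle here --- this is essentially the primary decomposition theorem for an operator annihilated by a polynomial. The two points that need a little care are checking that the \emph{algebraic} span (not its closure) of the generalized eigenspaces already fills $\cH$, which is exactly what the idempotent-like operators $Q_i$ deliver, and the bookkeeping needed to pass from an arbitrary annihilating product to the exact spectral description $\sigma(g) = \{\lambda_1,\ldots,\lambda_n\}$ demanded by the definition of $\mathcal{M}(\cH)$.
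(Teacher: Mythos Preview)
Your proposal is correct and follows essentially the same route as the paper: both prove the reverse implication via B\'ezout's identity for the pairwise coprime factors $\prod_{j\neq i}(\lambda-\lambda_j)^m$, producing operators that sum to the identity and land in the generalized eigenspaces. Your write-up is in fact more complete than the paper's, which omits the forward direction entirely and does not spell out the spectral containment $\sigma(g)\subseteq\{\lambda_1,\dots,\lambda_n\}$ or the final discarding step.
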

\begin{proof}
	We only need to show that $g\in\mathcal{M}(\cH)$  if $\Pi_{i=1}^n (g-\lambda_i)^m=0$. 
	Let $G_l(\lambda) $ be the polynomial $\Pi_{i=1}^{l-1} (\lambda-\lambda_i)^m\Pi_{i=l+1}^n(\lambda-\lambda_i)^m$.
	Then $G_1,G_2,\cdots,G_n$ are coprime polynomials.
	It follows that there are polynomials $a_i(\lambda),(1\le i\le n)$, such that 
	$$
	\sum_{i=1}^n a_i(\lambda)G_i(\lambda)=1.
	$$ 
It follows that $\sum_{i=1}^n a_i(g)G_i(g)=\Id$.
Then we can conclude that 
$$\cH =\sum_{1\leq i\leq n}G_i(g)\cH.$$
We also have $(g-\lambda_i)^mG_i(g)\cH=\Pi_{i=1}^n (g-\lambda_i)^m\cH=0$, which implies \eqref{3.1a}.

\end{proof}

We have  the following lemmas.
\begin{lem} \label{lm:A_orthogonal}
	 Let $A\in\cal{FS}(\cH)$ with domain $E$.  Suppose $g\in\cal{M}(\cH)$, which satisfied  $$g^*Ag=A,\quad gE=E,  $$ then   $\cH_\lambda, \cH_\mu$ is $A$-orthogonal if $\lambda\bar{\mu}\neq1$, i.e. 
\bea  (Ax,y)=0,\quad if \quad x\in \cH_\lambda\cap E,\quad y\in \cH_\mu\cap E. \eea 
\end{lem}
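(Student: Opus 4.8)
The plan is to exploit the $g$-invariance of $A$ to show that $\langle Ax,y\rangle$ vanishes on the "wrong" pairs of generalized eigenspaces by a polynomial/spectral argument. Fix $\lambda,\mu\in\sigma(g)$ with $\lambda\bar\mu\neq 1$, take $x\in\cH_\lambda\cap E$ and $y\in\cH_\mu\cap E$, and consider the bilinear form $\beta(u,v)=\langle Au,v\rangle$ defined on $E\times E$. The relation $g^*Ag=A$ together with $gE=E$ says precisely that $\beta(gu,gv)=\langle Agu,gv\rangle=\langle g^*Agu,v\rangle=\langle Au,v\rangle=\beta(u,v)$ for all $u,v\in E$; that is, $\beta$ is $g$-invariant. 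The self-adjointness of $A$ is not even needed for this step, only that $A$ is a well-defined operator with $g^*Ag=A$ on the common domain.

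Next I would run the standard generalized-eigenspace separation argument, but with respect to $\beta$ rather than the inner product. Because $x\in\cH_\lambda=\ker(g-\lambda)^m$, one can build, using that $\beta$ is $g$-invariant, a relation forcing $\beta(x,y)$ to satisfy a polynomial identity in $\lambda$ and $1/\bar\mu$. Concretely: from $\beta(gx,gy)=\beta(x,y)$ we get $\beta((g-\lambda)x + \lambda x,\, gy)=\beta(x,y)$, and iterating / expanding $(g-\lambda)^m x = 0$ one obtains that $\beta(x, (g^* )^k y)$ is determined in terms of $\beta(x,y)$; meanwhile $g^* y$ relates to $\bar\mu y$ modulo $\ker(g^*-\bar\mu)^m$ applied appropriately. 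The cleanest packaging: observe $\beta(x,y)=\beta(gx,gy)$ and more generally $\beta(x,y)=\beta(p(g)x,\, q(g)y)$ whenever $p(g)q(g)$ acts as the identity on the relevant vectors; choosing $p,q$ to be suitable polynomials adapted to the coprimality of $(\lambda-\lambda_i)^m$ (as in the preceding lemma) lets one replace $x$ by a vector annihilated by $(g-\lambda)^m$ and $y$ by one annihilated by $(g-\mu)^m$, and then the condition $\lambda\bar\mu\neq 1$ is exactly what makes the two constraints incompatible unless $\beta(x,y)=0$. Alternatively, and perhaps more transparently, write $\langle Ax,y\rangle=\langle Agx,gy\rangle=\lambda\langle A x,gy\rangle + \langle A(g-\lambda)x,\,gy\rangle$ and induct on the filtration degree of $x$ in $\ker(g-\lambda)^m$ and of $y$ in $\ker(g-\mu)^m$; the base case $gx=\lambda x$, $gy=\mu y$ gives $\langle Ax,y\rangle=\langle Agx,gy\rangle=\lambda\bar\mu\langle Ax,y\rangle$, hence $(1-\lambda\bar\mu)\langle Ax,y\rangle=0$, and the inductive step absorbs the lower-order terms which vanish by hypothesis.

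So the key steps in order are: (1) record that $g^*Ag=A$ and $gE=E$ make $\beta(u,v)=\langle Au,v\rangle$ a $g$-invariant form on $E\times E$; (2) prove the base case $gx=\lambda x,\ gy=\mu y$ gives $(1-\lambda\bar\mu)\langle Ax,y\rangle=0$; (3) set up the double induction on the nilpotency degrees, writing $x=(g-\lambda)x + \lambda x$ (so $(g-\lambda)x$ lies one level down in the filtration) and similarly for $y$, expand $\langle Ax,y\rangle=\langle Agx,gy\rangle$ using bilinearity, and conclude each term is either $\lambda\bar\mu\langle Ax,y\rangle$ or vanishes by the inductive hypothesis; (4) since $\lambda\bar\mu\neq 1$, deduce $\langle Ax,y\rangle=0$; (5) extend from eigenvectors to all of $\cH_\lambda\cap E$ and $\cH_\mu\cap E$ by linearity (every element of $\cH_\lambda$ is a finite sum of vectors in the filtration, which is covered by the induction).

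The main obstacle I anticipate is bookkeeping with the unbounded operator $A$ and its domain $E$: I must make sure that in every manipulation the vectors to which $A$ is applied actually lie in $E$. This is guaranteed by $gE=E$ (hence $(g-\lambda)x\in E$ when $x\in E$, so all vectors appearing in the induction stay in the common domain), but it needs to be invoked carefully, and one should check that $\cH_\lambda\cap E$ is $g$-invariant and that the decomposition of an element of $\cH_\lambda\cap E$ along the filtration of $\ker(g-\lambda)^m$ can be taken inside $E$ — which again follows from $gE=E$. Once the domain issue is handled cleanly, the algebra is exactly the classical argument that distinct generalized eigenspaces are orthogonal with respect to an invariant form, with "orthogonal" read relative to the (possibly degenerate, possibly indefinite) form $\beta$ and with the pairing condition $\lambda\bar\mu=1$ playing the role of "equal eigenvalue."
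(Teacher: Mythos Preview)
Your proposal is correct and follows essentially the same route as the paper: both arguments use the $g$-invariance $\langle Ax,y\rangle=\langle Agx,gy\rangle$, verify the base case on genuine eigenvectors via $(1-\lambda\bar\mu)\langle Ax,y\rangle=0$, and then induct on the total nilpotency degree by expanding $gx=\lambda x+(g-\lambda)x$ and $gy=\mu y+(g-\mu)y$. (Note the small slip in your step (3), where you wrote ``$x=(g-\lambda)x+\lambda x$'' rather than ``$gx=(g-\lambda)x+\lambda x$''; your earlier displayed expansion shows you have the right identity in mind.) Your explicit attention to the domain condition $gE=E$, ensuring that $(g-\lambda)x$ and $gy$ remain in $E$ throughout the induction, is exactly the bookkeeping the paper relies on as well.
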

\begin{proof} 
Let $x\in \ker(g-\lambda)^m \cap E$, $y\in  \ker(g-\mu)^n\cap E$ with $m,n\ge 1$.
We see that $(Ax,y)=0$ if $m+n=2$. In fact,  $$
(Ax,y)=(Agx,gy)=\lambda\bar\mu(Ax,y)
$$
implies $(Ax,y)=0$ since $\lambda\bar\mu\neq1$. 
Assume that $(Ax,y)=0$ if $m+n\le k$.
Note that $(g-\lambda)x\in \ker(g-\lambda)^{m-1}\cap E$, $(g-\mu)x\in \ker(g-\mu)^{n-1}\cap E$, $gx\in \ker(g-\lambda)^{m}\cap E$ and $gy\in \ker (g-\mu)^m\cap E$.
If $m+n=k+1$,
We have
$$
(Ax,y)=(Agx,gy)=(A(g-\lambda)x,g y)+(A\lambda x,(g-\mu)y)+\lambda\bar{\mu}(Ax,y)=\lambda\bar\mu(Ax,y).
$$
Since $\lambda\bar\mu\neq 1$, we have $(Ax,y)=0$.
By induction, we have $(Ax,y)=0$ with $x\in \cH_{\lambda}\cap E$ and $y\in \cH_{\mu}\cap E$.  This complete the proof.
\end{proof}

\begin{lem}\label{lm:ker_dom_decom}
	Under the condition of Lemma \ref{lm:A_orthogonal}, then
	$\ker A=\sum_{1\le i\le n}\ker A\cap \cH_i$ and $E=\sum_{1\le i\le n}E\cap \cH_{i}$.
\end{lem}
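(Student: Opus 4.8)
The plan is to leverage the $A$-orthogonality from Lemma \ref{lm:A_orthogonal} together with the algebraic direct sum decomposition $\cH = \sum_{1\le i\le n}\cH_{\lambda_i}$, which we already know is an inner direct sum. The key point is that both $\ker A$ and $E$ are $g$-invariant, so that the algebraic splitting of a vector according to the generalized eigenspaces of $g$ respects these subspaces.

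First I would observe that since $gE = E$ and $g$ is matrix-like, the spectral projections $P_i = G_i(g)\big(\text{normalized}\big)$ onto $\cH_{\lambda_i}$ along $\sum_{j\ne i}\cH_{\lambda_j}$ are polynomials in $g$; hence they map $E$ into $E$ (because $gE=E$ and polynomials in $g$ preserve $E$) and they commute with $A$ on $E$ in the sense that is made precise below. Concretely, for $x \in E$ write $x = \sum_i P_i x$ with $P_i x \in \cH_{\lambda_i}$; since $P_i = q_i(g)$ for a polynomial $q_i$ and $gE = E$, each $P_i x \in E$, giving $E = \sum_{1\le i\le n} E\cap\cH_{\lambda_i}$. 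The sum is direct because the ambient decomposition $\cH = \sum_i \cH_{\lambda_i}$ already is.

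Next, for $\ker A$: take $x \in \ker A$ and write $x = \sum_i x_i$ with $x_i = P_i x \in \cH_{\lambda_i}\cap E$ (using the previous paragraph, noting $x\in E$). I claim each $x_i \in \ker A$. To see this, fix $i$ and test $A x_i$ against an arbitrary $y \in E$, decomposing $y = \sum_j y_j$ with $y_j\in\cH_{\lambda_j}\cap E$. By Lemma \ref{lm:A_orthogonal}, $(Ax_i, y_j) = 0$ whenever $\lambda_i\bar\lambda_j \ne 1$, so $(Ax_i, y) = \sum_{j:\,\lambda_i\bar\lambda_j=1}(Ax_i, y_j)$. The subtlety is that the index $j$ with $\lambda_i\bar\lambda_j = 1$ need not be $i$ itself (unless $\lambda_i\in\mathbb U$), so one cannot immediately conclude $(Ax_i,y)=0$; instead, group the generalized eigenspaces in conjugate-reciprocal pairs $F_{\lambda_i}$ as in the statement of Theorem \ref{th:decom}. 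Then $A$ restricted to $F_{\lambda_i}$ is $A$-orthogonal to all other $F_{\lambda_j}$, and $x\in\ker A$ forces $A(\sum_{k:\,\lambda_k\in\{\lambda_i,\bar\lambda_i^{-1}\}}x_k) = 0$ on each such block; since $A$ is self-adjoint and the blocks are mutually $A$-orthogonal, summing recovers $Ax = \sum_{\text{blocks}} A(\text{block component})$ and each block term must vanish separately. Within a single block one then needs to split further, but since $\ker(A|_{F_{\lambda_i}})$ is finite-dimensional and $g$ acts on it, one can decompose $\ker(A|_{F_{\lambda_i}}) = (\ker A\cap\cH_{\lambda_i})\oplus(\ker A\cap\cH_{\bar\lambda_i^{-1}})$ by applying the $g$-projections once more and using that these projections preserve $\ker A$ (because they preserve $E$ and commute with the quadratic form of $A$ on the block).

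I expect the main obstacle to be precisely this pairing issue: $A$-orthogonality is governed by $\lambda\bar\mu=1$, not by $\lambda=\mu$, so the naive componentwise argument fails for $\lambda\notin\mathbb U$, and one must carefully argue that the $g$-spectral projections $P_i$ — which are not orthogonal projections — nonetheless map $\ker A$ into itself. The cleanest route is: $P_i$ is a polynomial in $g$, $g^*Ag = A$ implies $g$ preserves $\ker A$ (indeed $x\in\ker A \Rightarrow A(gx) = (g^*)^{-1}Ax = 0$, using invertibility of $g$), hence every polynomial in $g$ preserves $\ker A$, hence $P_i(\ker A)\subset\ker A$. Combined with $P_i(\ker A)\subset\cH_{\lambda_i}$ this gives $\ker A = \sum_i \ker A\cap\cH_{\lambda_i}$ directly, and the direct-sum property is again inherited from the ambient decomposition. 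This observation actually short-circuits the delicate part of the argument above, so I would present it first and use Lemma \ref{lm:A_orthogonal} only where genuinely needed.
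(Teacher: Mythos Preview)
Your final paragraph is exactly the paper's proof: both $E$ and $\ker A$ are $g$-invariant (the latter because $g^*Ag=A$ with $g$ invertible gives $A(gx)=(g^*)^{-1}Ax=0$ for $x\in\ker A$), and since the spectral projections $P_i$ are polynomials in $g$, they preserve each subspace and yield the decomposition directly. The long detour through $A$-orthogonality and the pairing $\lambda\bar\mu=1$ is unnecessary --- as you yourself observe --- and the paper does not invoke Lemma~\ref{lm:A_orthogonal} here at all.
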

\begin{proof}
	Note that $E$ is a invariant subspace of $g$.
	Then $\Pi_{1\le i\le n}(g-\lambda_i)^m=0$ on $E$.
	It follows that $E=\sum_{1\le i\le n}\ker (g|_E-\lambda_i)^m=\sum_{1\le i\le n}E\cap \cH_{i}$.
	We have  $$g^*A g(\ker A)=A\ker A=0.$$ It follows that $g(\ker A)\subset \ker A$. So $\ker A$ is a invariant subspace of $g$. Similarly we have
		$\ker A=\sum_{1\le i\le n}\ker A\cap \cH_i$.
	This complete the proof. 
\end{proof}

For $A\in\cal{C}(\cH)$, assume that $\cH=\sum_{1\le i\le k}\cH_i$ where 
all of  $\cH_i$ are closed subspaces of $\cH$. 
Let $E$ be the domain of $A$ and
assume that $E=\sum_{1\le i\le k} E\cap \cH_i$. $\cH_i$, $\cH_j$ are $A$-orthogonal if $i\neq j$. 
Recall that we set \bea F_\lambda=\left\{\begin{array}{cc} \cH_\lambda, \quad  if\quad  \lambda\in \mathbb{U};
\\ \cH_\lambda+\cH_{\bar{\lambda}^{-1}}, \quad \quad  if \quad  \lambda \notin \mathbb{U}.   \end{array}\right.\nonumber \eea
then we have 
 $ \cH=\sum_{1\leq i\leq k}F_{\lambda_i} $ 
and $F_{\lambda_i}, F_{\lambda_j}$ are $A$-orthogonal if $i\neq j$.

Let $X=\bigoplus_{1\le i \le k} F_{\lambda_i}$, 
we define an inner product on $X$:
$$
((x_1,x_2,\cdots,x_k),(y_1,y_2,\cdots,y_k))=\sum_{1\le i\le k} (x_i,y_i),
$$
where $(x_i,y_i)$ is the inner product in $\cH$.
Then $X$ is a Hilbert space and the map  $$ M: (x_1,x_2,\cdots,x_k)\to \sum_{1\le i \le k} x_i$$ is a homeomorphism from $X$ to $\cH$.

Please note  that $A|_{F_{\lambda_i}}$ is the map  $M^*AM:M^{-1}F_{\lambda_i}\to M^{-1}( F_{\lambda_i})$.  It is a self-adjoint Fredholm operator on $M^{-1}F_{\lambda_i}$ with domain $M^{-1}(E\cap F_{\lambda_i})$.  It follows that 
	\begin{equation}
	\ker(A|_{F_{\lambda_i}})=\ker(AM)\cap M^{-1}(F_{\lambda_i})=M^{-1}(\ker A\cap F_{\lambda_i}). \nonumber
	\end{equation}

\begin{prop}\label{pro:decomposition}
	 Suppose $g\in \cal{M}^*(\cH) $,  $A_s\in C([0,1], \cal{FS}(\cH))$ with fixed domain $E$ and $gE=E$.  We assume    $g^* A_sg=A_s$ of $s\in[0,1]$, then we have 
\bea sf(A_s)=sf(A_s|_{F_{\lambda_1}})+\cdots+sf(A_s|_{F_{\lambda_k}}).  \eea 
\end{prop}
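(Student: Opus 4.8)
The plan is to transport the problem, via the homeomorphism $M$ from $X:=\bigoplus_{1\le i\le k}F_{\lambda_{i}}$ onto $\cH$ introduced just above the statement, to the orthogonal direct sum $X$, show there that $M^{*}A_{s}M$ is genuinely the direct sum of the blocks $A_{s}|_{F_{\lambda_{i}}}$, and then read off the formula from cogredient invariance (Theorem \ref{th:coginva}) together with the direct sum property of spectral flow. First I would verify that the abstract setup preceding the Proposition applies to the path $A_{s}$. Since $g\in\mathcal{M}^{*}(\cH)$, $gE=E$, and $g^{*}A_{s}g=A_{s}$, Lemma \ref{lm:A_orthogonal} applies to each $A_{s}$, so $\cH_{\lambda}$ and $\cH_{\mu}$ are $A_{s}$-orthogonal whenever $\lambda\bar{\mu}\ne 1$; hence the grouped subspaces $F_{\lambda_{1}},\dots,F_{\lambda_{k}}$ are pairwise $A_{s}$-orthogonal. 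Lemma \ref{lm:ker_dom_decom} then gives $E=\bigoplus_{1\le i\le k}E\cap F_{\lambda_{i}}$ and $\ker A_{s}=\bigoplus_{1\le i\le k}\ker A_{s}\cap F_{\lambda_{i}}$ for every $s$, so the maps $A_{s}|_{F_{\lambda_{i}}}=M^{*}A_{s}M|_{M^{-1}F_{\lambda_{i}}}$ are self-adjoint Fredholm operators on $M^{-1}F_{\lambda_{i}}$ with fixed domain $M^{-1}(E\cap F_{\lambda_{i}})$, as already recorded above.

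Next I would establish the block-diagonal identity $M^{*}A_{s}M=\bigoplus_{i=1}^{k}A_{s}|_{F_{\lambda_{i}}}$. By Lemma \ref{lem: continuous}, $M^{*}A_{s}M\in C([0,1],\mathcal{FS}(X))$ with domain $\bigoplus_{i}M^{-1}(E\cap F_{\lambda_{i}})$, and the only point is that the off-diagonal blocks vanish: for $\tilde{x}$ in the $i$-th summand of the domain, $(M^{*}A_{s}M\tilde{x},\tilde{y})_{X}=(A_{s}x,y)_{\cH}=0$ for every $\tilde{y}$ in the $j$-th summand of the domain with $j\ne i$, by $A_{s}$-orthogonality; and since $E\cap F_{\lambda_{j}}$ is dense in $F_{\lambda_{j}}$ — the bounded projection of $\cH$ onto $F_{\lambda_{j}}$ along the other $F_{\lambda_{l}}$ carries the dense subspace $E$ into $E\cap F_{\lambda_{j}}$ — the $j$-th orthogonal component in $X$ of $M^{*}A_{s}M\tilde{x}$ is zero. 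Thus $M^{*}A_{s}M$ is the orthogonal direct sum of the continuous paths $A_{s}|_{F_{\lambda_{i}}}$ in $\mathcal{FS}(M^{-1}F_{\lambda_{i}})$. The conclusion is then one line: Theorem \ref{th:coginva}, applied to the constant cogredient $M\in\mathcal{L}^{*}(X,\cH)$, gives $sf(A_{s};[0,1])=sf(M^{*}A_{s}M;[0,1])$, and the direct sum property of spectral flow gives $sf(M^{*}A_{s}M;[0,1])=\sum_{i=1}^{k}sf(A_{s}|_{F_{\lambda_{i}}};[0,1])$; combining these is the claimed identity. One may instead finish via Lemma \ref{abstract_decomposition}, applied with $S$ the convex set of $B\in\mathcal{FS}(\cH)$ with domain $E$ for which the $F_{\lambda_{i}}$ are pairwise $B$-orthogonal, $K=\sum_{i}\pi_{i}^{*}\pi_{i}$ (with $\pi_{i}$ the projection of $\cH$ onto $F_{\lambda_{i}}$), and $f_{i}(B)=B|_{F_{\lambda_{i}}}$: one checks $(Kx,x)>0$, $B+tK\in S$, condition (a) because $\frac{d}{dt}f_{i}(B+tK)$ is positive definite, and condition (b) because every $B\in S$ inherits the same block-diagonal structure.

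I expect the main obstacle to be the block-diagonalization step, together with the verification that underlies it: upgrading the $A_{s}$-orthogonality of Lemma \ref{lm:A_orthogonal}, which is stated only on the common domain $E$, to the closed subspaces $F_{\lambda_{j}}$ via the density remark, and confirming that the resulting restriction-to-invariant-summand operation produces bona fide self-adjoint Fredholm blocks with the stated fixed domain, continuous in $s$ for the gap topology. Once this is in place, the reduction through $M$, the appeal to Theorem \ref{th:coginva}, and the direct sum (or abstract decomposition) property are routine. Note that it is the passage to the orthogonal sum $X$ that lets the argument go through without assuming $g$ unitary, since the $F_{\lambda_{i}}$ themselves need not be mutually orthogonal inside $\cH$.
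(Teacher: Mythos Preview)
Your proposal is correct and follows essentially the same approach as the paper: transport to $X$ via the constant cogredient $M$, invoke Theorem~\ref{th:coginva} to get $sf(A_s)=sf(M^*A_sM)$, and then apply the direct sum property to the orthogonal decomposition $X=\bigoplus_i F_{\lambda_i}$. In fact you supply more detail than the paper does, explicitly verifying the block-diagonal structure of $M^*A_sM$ via Lemma~\ref{lm:A_orthogonal} and the density of $E\cap F_{\lambda_j}$ in $F_{\lambda_j}$, which the paper leaves implicit.
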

\begin{proof} 

By Theorem \ref{th:coginva}, we have $M^*A_sM\in C([0,1],\cal{FS}(X))$,
and 
$$
sf(A_s)=sf(M^*A_sM).
$$
Note that $X=\bigoplus_{1\le i\le k} F_{\lambda_i}$ is an orthogonal decomposition.
 By the Direct sum property of spectral flow,  we have
\begin{equation*}
sf(A_s)=sf(M^*A_sM)=\sum_{1\le i\le k}sf(A_s|_{F_{\lambda_i}}).
\end{equation*} This complete the proof. 
\end{proof}

\begin{lem} \label{lm:sf_notin_circle}
	If $\lambda\notin\mathbb{U}$ then we have 
\bea sf(A_s|_{F_{\lambda}})=\frac{1}{2}(dim\ker (A_1|_{F_{\lambda}}) -dim\ker (A_0|_{F_{\lambda}}) )  .  \eea 

\end{lem}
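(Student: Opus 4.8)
The plan is to exploit the structure of $F_\lambda = \cH_\lambda + \cH_{\bar\lambda^{-1}}$ when $\lambda\notin\mathbb U$. The key observation is that on this subspace the form $A_s$ is, up to the $A$-orthogonality already established in Lemma \ref{lm:A_orthogonal}, a ``hyperbolic pair'': $\cH_\lambda$ and $\cH_{\bar\lambda^{-1}}$ are each $A_s$-isotropic (since $\lambda\cdot\overline\lambda = |\lambda|^2 \neq 1$ forces $\cH_\lambda$ to be $A_s$-orthogonal to itself, and likewise $\overline{\lambda^{-1}}\cdot\lambda^{-1}\ne 1$ kills $\cH_{\bar\lambda^{-1}}$ against itself), while the pairing between the two pieces is nondegenerate modulo kernels. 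I would first record this: writing $A_s|_{F_\lambda}$ in block form with respect to the (non-orthogonal, but direct) splitting $F_\lambda = \cH_\lambda \oplus \cH_{\bar\lambda^{-1}}$, the diagonal blocks vanish, so $A_s|_{F_\lambda}$ has the shape $\begin{pmatrix} 0 & C_s^* \\ C_s & 0\end{pmatrix}$ for a suitable closed operator $C_s$ between the two pieces (after passing through the homeomorphism $M$ as in Proposition \ref{pro:decomposition}, so that everything lives on a genuine Hilbert space with honest orthogonal decomposition).

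Next I would build an explicit homotopy that rotates such an off-diagonal self-adjoint operator to its negative while keeping the dimension of the kernel constant. The standard device is the path $R_\theta = \begin{pmatrix} \cos\theta\, I & -\sin\theta\, I \\ \sin\theta\, I & \cos\theta\, I\end{pmatrix}$, or rather the cogredient action $R_\theta^* (A_s|_{F_\lambda}) R_\theta$; a short computation shows that conjugating an off-diagonal operator by such a rotation at $\theta = \pi/2$ swaps the two blocks and introduces a sign, giving $\begin{pmatrix} 0 & -C_s^* \\ -C_s & 0\end{pmatrix} = -\,A_s|_{F_\lambda}$. Crucially, throughout the homotopy $\theta\in[0,\pi/2]$ the operator stays in $\cal{FS}$ and — because $R_\theta$ is invertible — $\dim\ker$ is unchanged, so by Theorem \ref{th:coginva} (cogredient invariance) the spectral flow is unaffected: $sf(A_s|_{F_\lambda}) = sf(-A_s|_{F_\lambda})$. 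On the other hand, $sf(-A_s|_{F_\lambda}) = -sf(A_s|_{F_\lambda}) + (\dim\ker(A_1|_{F_\lambda}) - \dim\ker(A_0|_{F_\lambda}))$ by the general relation between the spectral flow of a path and of its negative (the endpoint-kernel correction term; this is the $\varepsilon$-perturbation / intertwining identity $sf(A_s) + sf(-A_s) = \dim\ker A_1 - \dim\ker A_0$ recorded in the Appendix). Combining the two displays and solving for $sf(A_s|_{F_\lambda})$ yields $2\,sf(A_s|_{F_\lambda}) = \dim\ker(A_1|_{F_\lambda}) - \dim\ker(A_0|_{F_\lambda})$, which is the claim.

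The main obstacle I anticipate is making the ``off-diagonal block form'' rigorous at the level of \emph{unbounded} self-adjoint Fredholm operators: one must check that the rotation $R_\theta$ preserves the domain $M^{-1}(E\cap F_\lambda)$ (it does, since $R_\theta$ acts as scalars on each of the two summands and $E$ decomposes compatibly by Lemma \ref{lm:ker_dom_decom}), that $R_\theta^*(A_s|_{F_\lambda})R_\theta$ is jointly continuous in $(s,\theta)$ with the gap topology (this follows from Lemma \ref{lem: continuous}), and that the off-diagonal structure is exactly preserved so that $\theta=\pi/2$ really lands on $-A_s|_{F_\lambda}$. A secondary point is to confirm the sign bookkeeping in $sf(A_s)+sf(-A_s)=\dim\ker A_1-\dim\ker A_0$ matches the sign conventions used here; if one prefers to avoid quoting it, the same correction term drops out directly from a positive-path argument (perturb $A_s$ by $+\varepsilon I$ on $F_\lambda$, use that $A_s|_{F_\lambda}$ and its rotation have identical kernels at all $s$, and compare the two concatenated paths along the boundary of the square $[0,1]_s\times[0,\pi/2]_\theta$), which is closer in spirit to the proof of Lemma \ref{abstract_decomposition}.
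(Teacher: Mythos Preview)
Your overall strategy coincides with the paper's: show that $A_s|_{F_\lambda}$ is cogredient to $-A_s|_{F_\lambda}$, apply Theorem~\ref{th:coginva}, and then use $sf(A_s)+sf(-A_s)=\dim\ker A_1-\dim\ker A_0$. The difference is only in the choice of intertwiner, and here your choice does not work.

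The rotation $R_\theta=\begin{pmatrix}\cos\theta\,I&-\sin\theta\,I\\ \sin\theta\,I&\cos\theta\,I\end{pmatrix}$ is problematic for two reasons. First, its off-diagonal blocks are maps between the \emph{distinct} summands $M^{-1}\cH_\lambda$ and $M^{-1}\cH_{\bar\lambda^{-1}}$; writing ``$I$'' there tacitly assumes an identification of these two subspaces that is not available in general (and certainly not part of the hypotheses). Second, even if one grants such an identification, the computation does not come out as you claim: for $A=\begin{pmatrix}0&C^*\\C&0\end{pmatrix}$ one gets
\[
R_{\pi/2}^*\,A\,R_{\pi/2}=\begin{pmatrix}0&-C\\-C^*&0\end{pmatrix},
\]
which has the off-diagonal blocks \emph{swapped}, not simply negated; this equals $-A$ only when $C=C^*$, which is meaningless here since $C$ maps between different spaces. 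So the assertion that $R_{\pi/2}$ conjugates $A_s|_{F_\lambda}$ to $-A_s|_{F_\lambda}$ is false.

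The paper avoids both issues by taking instead the reflection $Q:x+y\mapsto -x+y$ for $x\in M^{-1}\cH_\lambda$, $y\in M^{-1}\cH_{\bar\lambda^{-1}}$. This is well defined with no identification needed, satisfies $Q^*=Q$, and a one-line check using the $A_s$-isotropy of each summand (your Lemma~\ref{lm:A_orthogonal} observation) gives $Q(A_s|_{F_\lambda})Q=-A_s|_{F_\lambda}$ exactly. With this $Q$ in place your argument is complete; note also that the $\theta$-homotopy is then superfluous, since Theorem~\ref{th:coginva} already applies to the constant intertwiner $Q$. (Minor point: the identity $sf(A_s)+sf(-A_s)=\dim\ker A_1-\dim\ker A_0$ is not literally listed in the Appendix, though it follows easily from the properties there; the paper also invokes it without comment.)
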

\begin{proof} 
Recall that $A_s|_{F_\lambda}$ is the operator 
$M^*A_sM: M^{-1}(F_\lambda)\to M^{-1}(F_{\lambda})  $ and 
$M^{-1}(F_{\lambda})=M^{-1}\cH_\lambda+ M^{-1}\cH_{\bar\lambda^{-1}}$.
We also have $M^{-1}\cH_\lambda\perp M^{-1}\cH_{\bar\lambda^{-1}} $.
Let $Q$ be the map $x+y\to -x+y$ with $x\in M^{-1}\cH_\lambda, y\in  M^{-1}\cH_{\bar\lambda^{-1}}$. Then $Q$ is invertible and $Q^*=Q$.
Let $x_1,x_2\in M^{-1}(F_{\lambda}\cap E)$,  $y_1,y_2\in M^{-1}(F_{\bar\lambda^{-1}}\cap E)$.
We have
\begin{equation*}
(QM^*A_sMQ(x_1+y_1),(x_2+y_2))=-(M^*A_sM(x_1+y_1),(x_2+y_2)).
\end{equation*} 
It follows that $-A_s|_{F_\lambda}=Q(A_s|_{F_\lambda})Q$.
Then by Theorem \ref{th:coginva}, we have
$$2 sf(A_s|_{F_{\lambda}})=sf(A_s|_{F_{\lambda}})+sf(QA_s|_{F_\lambda}Q)=sf(A_s)+sf(-A_s)=dim \ker(A_1|_{F_{\lambda}})-dim \ker ( A_0|_{F_{\lambda}}).$$
The lemma then follows.
\end{proof}

\begin{proof}[Proof of Theorem \ref{th:decom}]
By Proposition \ref{pro:decomposition} and Lemma \ref{lm:sf_notin_circle} ,
we only need to show that 
$$
\frac{1}{2}(dim\ker (A_1|_{\hat{F}}) -dim\ker (A_0|_{\hat{F}}))=\sum_{\lambda\notin \mathbb{U}}\frac{1}{2}(dim\ker (A_1|_{F_{\lambda}}) -dim\ker (A_0|_{F_{\lambda}}) ) .
$$
In fact $\ker A_1|_{\hat{F}}=\ker A_1\cap \hat{F}$.
By Lemma \ref{lm:ker_dom_decom}, we see that  $\ker A_1\cap \hat{F}=\sum_{\lambda\notin \mathbb{U}}\ker(A_1)\cap F_{\lambda}$.
It follows that $dim\ker (A_1|_{\hat{F}})=\sum_{\lambda\notin \mathbb{U}}dim \ker(A_1|_{F_{\lambda}})$. It is also true for $A_0$.
The theorem then follows.
 \end{proof}

\begin{cor}\label{cor2.1} Under the condition of Theorem \ref{th:decom}, if $\sigma(M)\cap \mathbb{U}=\emptyset$, then  \bea  sf(A_s)=\frac{1}{2}(dim\ker (A_1) -dim\ker (A_0)).    \eea
If the path is closed,  then $$sf(A_s)=0.$$
\end{cor}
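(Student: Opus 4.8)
The plan is to obtain both statements as immediate specializations of Theorem \ref{th:decom}, since all the substantive work has already been done there. First I would unwind the hypothesis $\sigma(g)\cap\mathbb{U}=\emptyset$ (this should read $\sigma(g)$ rather than $\sigma(M)$) in the notation set up just before Theorem \ref{th:decom}: the list $\{\lambda_1,\dots,\lambda_j\}=\sigma(g)\cap\mathbb{U}$ is empty, so the decomposition formula \eqref{decomf} contains none of the terms $sf(A_s|_{F_{\lambda_i}})$. Simultaneously, $\sigma(g)\cap\mathbb{U}^c=\sigma(g)$, hence $\hat F=\span\{F_\lambda:\lambda\in\sigma(g)\cap\mathbb{U}^c\}=\span\{F_\lambda:\lambda\in\sigma(g)\}$, which is all of $\cH$ by the decomposition $\cH=\sum_{1\le i\le k}F_{\lambda_i}$ recorded above the theorem. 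Consequently $A_s|_{\hat F}$ is literally $A_s$, and \eqref{decomf} collapses to
\[
 sf(A_s)=\frac12\bigl(\dim\ker(A_1)-\dim\ker(A_0)\bigr),
\]
which is the first assertion.

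For the second assertion I would simply note that a closed path has $A_0=A_1$, so $\dim\ker(A_0)=\dim\ker(A_1)$ and the right-hand side above vanishes, giving $sf(A_s)=0$.

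There is no real obstacle here; the only points worth spelling out are the identification $\hat F=\cH$ — that is, when $\sigma(g)$ avoids the unit circle every summand $F_\lambda$ is accounted for in $\hat F$, so nothing is lost — and the remark that Theorem \ref{th:decom} is being applied with $j=0$, i.e. with an empty family of circle eigenvalues, which causes no difficulty in its proof.
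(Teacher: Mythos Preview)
Your argument is correct and is exactly the immediate specialization of Theorem \ref{th:decom} that the paper intends; indeed the paper gives no separate proof for this corollary, treating it as an obvious consequence. Your observation that the hypothesis should read $\sigma(g)\cap\mathbb{U}=\emptyset$ rather than $\sigma(M)$ is also right, and the identification $\hat F=\cH$ (hence $\dim\ker(A_s|_{\hat F})=\dim\ker A_s$) is precisely what is needed.
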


\begin{rem}\label{re3.8}
In the case  $B$ is compact with respect to $A$,  the spectral flow $A-sB$ is only depend on the end points, thus  we define  the relative Morse index by  (follows \cite{ZL99})
 \bea  I(A,A-B)=-sf(A-sB; s\in[0,1]).  \eea 
Especially, when $A$ is positive, then $I(A,A-B)=m^-(A-B)$ is just the Morse index of $A-B$, i.e. the total number of negative eigenvalues. It is obvious that Theorem \ref{th:decom} and Corollary \ref{cor2.1} give the decomposition formula of relative Morse index and Morse index.
\end{rem}

\section{Applications to Hamiltonian systems}
In this section, we will give the applications for Hamiltonian systems. We list $6$ cases which are common in applications.

For  $\Lambda\in Lag(4n)$, we  consider the  solution of  the flowing linear Hamiltonian systems
\begin{eqnarray} \dot{z}(t)= J B(t), \quad (z(0), z(T))\in\Lambda,  \label{3.1}
  \end{eqnarray}
where  $B(t)\in C([0,T], \cal{S}(\R^{2n}))$. 
   Recall that $A=-J\d$  is self adjoint operator on                              
  $\cH:=L^2([0,T],\mathbb C^{2n})$  with domain $$E_\Lambda=\{x\in W^{1,2}([0,T], \mathbb C^{2n}),  (x(0), x(T))\in\Lambda \},$$      
then  $A, A-B\in \cal{FS}(\cH)$.   We will construct $g\in\mathcal{M}(\cH)$ such that \bea g^*Ag=A,\quad g^*Bg=B,\quad  g E_\Lambda=E_\Lambda. \label{4.2} \eea
In order to make $g E_\Lambda=E_\Lambda$,  $g$ is always assumed to preserve the boundary condition, that is $g\Lambda=\Lambda$ which means 
\bea ((gx)(0),(gx)(T))\in \Lambda \quad if \quad  (x(0),x(T))\in\Lambda. \nonumber \eea
   Hence we have \bea g^*(A-sB)g=A-sB, \quad s\in\R.\nonumber \eea 
and get the  decomposition formula \eqref{decomf}.

   It is well known that for $P\in\Sp(2n)$,  if $\lambda\in\sigma(P)$, then $\bar{\lambda},  \lambda^{-1},  \bar{\lambda}^{-1}\in\sigma(P)$ and possess the same geometric and algebraic multiplicities \cite{Lon02}.  Case 1  is given by symplectic matrix.

Case 1.   For $P\in\Sp(2n)$, and satisfied $P\Lambda=\Lambda$  
which means if $(x(0),x(T))\in\Lambda$, then $(Px(0),Px(T))\in\Lambda$. 
 Let    \bea  (gx)(t)=Px(t),\label{g1}  \eea                        
  then it is  obvious that $(g^*x)(t)=P^*x(t)$, $g^*Ag=A$,  $g\Lambda=\Lambda$.  Moreover, we assume $P^*B(t)P=B(t)$, then 
 $ g^*Bg=g$, hence we have \eqref{4.2}. 
 It is obvious that  $g\in\mathcal{M}(\cH)$ and \bea  \sigma(g)=\sigma(P). \nonumber\eea                     
  Let $V_\lambda=\ker(P-\lambda)^{2n}$, then $\cH_\lambda=L^2([0,T], V_\lambda)$.                    
  
    Case 2.  
  For  $S\in\Sp(2n)$, we  consider the $S$-periodic solution of  \eqref{3.1}, that is  \bea z(0)= S z(T), \label{bds} \eea
and moreover  we assume   \bea S^*B(0)S=B(T). \label{Bc}\eea
       We assume  
 \eqref{3.1} with $S$-periodic boundary conditions  admits a $\mathbb Z_k$   symmetry. More precisely,  let $P\in\Sp(2n)$ and $PS=SP$, the group generator $g$ is defined by 
 \bea  (gx)(t)=\left\{\begin{array}{cc} Px(t+\frac{T}{k}), \quad t\in[0,\frac{k-1}{k}T];
\\ PS^{-1}x(t+\frac{T}{k}-T), \quad t\in[\frac{k-1}{n}T,T].   \end{array}\right.  \label{cird} \eea                            
Easy computation show that  $g\in \cal{L}(\cH)$ and $g E=E$. By direct computation, we get the adjoint operator  $g^*$.
\begin{lem}  The adjoint operator $g^*$ is given by \bea  (g^*x)(t)=\left\{\begin{array}{cc} (S^*)^{-1}P^*x(t+T-\frac{T}{k}), \quad t\in[0,\frac{T}{k}];
\\ P^*x(t-\frac{T}{k}), \quad t\in[\frac{T}{k},T].   \end{array}\right.  \eea   
\end{lem}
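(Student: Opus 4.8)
The plan is to verify the formula directly from the definition of the Hilbert-space adjoint. Since $g\in\cal{L}(\cH)$ has already been observed, $g^*$ is the unique bounded operator on $\cH=L^2([0,T],\mathbb C^{2n})$ satisfying $(gx,y)=(x,g^*y)$ for all $x,y\in\cH$, so it suffices to compute $(gx,y)$ and rewrite it in the form $(x,hy)$, where $h$ is the operator appearing on the right-hand side of the asserted formula.

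First I would split the integral $(gx,y)=\int_0^T((gx)(t),y(t))\,dt$ according to the piecewise definition \eqref{cird}:
$$(gx,y)=\int_0^{\frac{k-1}{k}T}\big(Px(t+\tfrac Tk),y(t)\big)\,dt+\int_{\frac{k-1}{k}T}^{T}\big(PS^{-1}x(t+\tfrac Tk-T),y(t)\big)\,dt.$$
In the first integral I substitute $s=t+\frac Tk$, which maps $[0,\frac{k-1}{k}T]$ bijectively onto $[\frac Tk,T]$; transferring $P$ to the second slot of the inner product turns this term into $\int_{T/k}^{T}\big(x(s),P^*y(s-\tfrac Tk)\big)\,ds$. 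In the second integral I substitute $s=t+\frac Tk-T$, which maps $[\frac{k-1}{k}T,T]$ bijectively onto $[0,\frac Tk]$; transferring $PS^{-1}$ and using $(PS^{-1})^*=(S^{-1})^*P^*=(S^*)^{-1}P^*$ turns this term into $\int_0^{T/k}\big(x(s),(S^*)^{-1}P^*y(s+T-\tfrac Tk)\big)\,ds$.

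Adding the two terms,
$$(gx,y)=\int_0^{T/k}\big(x(s),(S^*)^{-1}P^*y(s+T-\tfrac Tk)\big)\,ds+\int_{T/k}^{T}\big(x(s),P^*y(s-\tfrac Tk)\big)\,ds,$$
and reading off the operator that acts on $y$ on each of the two subintervals $[0,\frac Tk]$ and $[\frac Tk,T]$ gives exactly the claimed expression for $g^*$. I do not anticipate a genuine obstacle: the argument is a routine change of variables, and the only points that need a little care are checking that the images of the two subintervals under the affine substitutions tile $[0,T]$ exactly (which is what forces the endpoints $\frac Tk$ and $\frac{k-1}{k}T$ to match up) and the elementary matrix identity $(PS^{-1})^*=(S^*)^{-1}P^*$. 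Note that the commutation hypothesis $PS=SP$ is not used in this computation; it is needed only later, to ensure $g\in\cal{M}(\cH)$.
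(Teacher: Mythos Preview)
Your proposal is correct and follows essentially the same approach as the paper: both split $(gx,y)$ according to the piecewise definition of $g$, perform the two affine changes of variable $s=t+\tfrac Tk$ and $s=t+\tfrac Tk-T$, and move the matrix factors across the inner product to read off $g^*$. Your write-up is in fact a bit more explicit about the substitutions and the identity $(PS^{-1})^*=(S^*)^{-1}P^*$, and your remark that the hypothesis $PS=SP$ plays no role here is accurate.
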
                 
 \begin{proof}

Let $y\in L^2([0,T],\mathbb{C}^{2n})$. We see that 
$$
\int_{0}^{\frac{n-1}{n}T}(Px(t+T/k),y(t))dt= \int_{T/n}^{T}(x(t),P^*y(t-T/k))dt,
$$
and
$$
\int_{\frac{k-1}{k}T}^{T} (PS^{-1}x(t+T/k-T), y(t))dt=\int_{0}^{T/k}(x(t),(S^*)^{-1}P^*y(t+T-T/k)).
$$
Then we have checked  $<gx,y>_{L^2}=<x,g^*y>_{L^2}$ for each $x,y\in L^2([0,T],\mathbb{C}^{2n})$. 
 \end{proof}              
 We assume $B(t)$ satisfied              
       \bea  B(t)=\left\{\begin{array}{cc} (S^*)^{-1}P^*B(t+T-\frac{T}{k})PS^{-1}, \quad t\in[0,\frac{T}{n}];
\\ P^*B(t-\frac{T}{k})P, \quad t\in[\frac{T}{n},T].   \end{array}\right.  \label{BSy1}    \eea     
Please note that \eqref{BSy1} implies \eqref{Bc}, and \eqref{4.2} is satisfied.
            Since  $$ (g^kx)(t)=P^kS^{-1}x(t),  $$
            which is a multipliticity operator on $\cH$.   Then \bea \sigma(g^k)=\sigma(P^kS^{-1}).\nonumber \eea   
 To simply the notation, for $\Omega\in\mathbb C$, we define
 \bea \Omega^{\frac{1}{k}} =\{z\in\mathbb C, z^k\in\Omega   \}.\nonumber\eea
  By this notation, we have $\sigma(g)\in(\sigma(P^kS^{-1}))^{\frac{1}{n}}$.   For $\lambda\in\sigma(g)$, $\cH_\lambda=\ker(g-\lambda)^{2n}$.

  Case 3.  
 We consider the generalized brake symmetry. 
  We call  a matrix $M$  anti-symplectic if it satisfied
\bea  M^*JM=-J.   \eea
We denote by $\Sp_a(2n)$ the set of anti-symplectic matrices. For $M_1,M_2\in \Sp_a(2n)$ and $M_3\in\Sp(2n)$, then it is obvious that 
$$ M_1M_2\in \Sp(2n) ,  \quad  M_1M_3\in\Sp_a(2n) .   $$
We list some basic property of $\Sp_a(2n)$ follows.
\begin{lem}  If $M\in\Sp_a(2n)$, $\lambda\in\sigma(M)$, then $\bar{\lambda},-\lambda^{-1}, -\bar{\lambda}^{-1}\in\sigma(M)$ and posses the same geometric and algebraic multiplicities.
\end{lem}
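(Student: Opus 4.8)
The statement is the anti-symplectic analogue of the well-known spectral symmetry for symplectic matrices, and I would prove it by the same piece of linear algebra. First, $M\in\Sp_a(2n)$ is invertible: taking determinants in $M^*JM=-J$ gives $(\det M)^2\det J=\det(-J)=\det J$, so $(\det M)^2=1$. Since the defining relation is stated over the reals, I write $M^*=M^\top$. The conjugate symmetry is then immediate: $\det(xI-M)\in\R[x]$, so $\lambda$ and $\bar\lambda$ have equal algebraic multiplicity, and entrywise complex conjugation restricts to an $\R$-linear bijection $\ker(M-\lambda I)\to\ker(M-\bar\lambda I)$, so they have equal geometric multiplicity.

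The core of the proof is the identity hidden in the defining relation. From $M^*JM=-J$ we get $M^*J=-JM^{-1}$, that is $J^{-1}M^*J=-M^{-1}$. Since every square matrix is similar to its transpose, $M$ is similar to $M^*$, hence to $-M^{-1}$; in particular $M$ and $-M^{-1}$ have the same Jordan canonical form. Now, for an invertible matrix the operation $X\mapsto -X^{-1}$ carries a Jordan block $J_k(\mu)$ (with $\mu\neq0$) to a matrix similar to $J_k(-\mu^{-1})$: the inverse of a Jordan block is again a single Jordan block of the same size $k$, with eigenvalue $\mu^{-1}$, and negation merely replaces the eigenvalue by $-\mu^{-1}$. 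Therefore the multiset of sizes of Jordan blocks of $-M^{-1}$ at the eigenvalue $\lambda$ equals the multiset of sizes of Jordan blocks of $M$ at $-\lambda^{-1}$ (here $\lambda\neq0$ because $M$ is invertible). Combining this with the equality of the Jordan forms of $M$ and $-M^{-1}$, the Jordan structure of $M$ at $\lambda$ coincides with its Jordan structure at $-\lambda^{-1}$; in particular $-\lambda^{-1}\in\sigma(M)$ with the same algebraic and geometric multiplicities as $\lambda$. Applying the conjugate symmetry of the first step to $-\lambda^{-1}$, and using $\overline{-\lambda^{-1}}=-\bar\lambda^{-1}$, settles all four eigenvalues.

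I do not anticipate a genuine obstacle; the step that needs the most care is the bookkeeping in the Jordan-block transfer above, resting on the standard fact that the inverse of a single Jordan block is again a single Jordan block of the same size (clear, since $J_k(\mu)^{-1}$ is upper triangular with the sole eigenvalue $\mu^{-1}$ and a one-dimensional eigenspace). The remaining minor points are that invertibility of $M$ makes $-\lambda^{-1}$ and $-\bar\lambda^{-1}$ meaningful, and that $M^{-1}\in\Sp_a(2n)$ as well, so that $\lambda\mapsto\bar\lambda$ and $\lambda\mapsto-\lambda^{-1}$ generate a finite group acting on $\sigma(M)$. If one prefers to bypass complex Jordan forms, the same conclusions follow from $\det(xI-M)=\det(xI-M^*)=\det(xI+M^{-1})$ for the algebraic multiplicities, together with $\ker(M-\lambda I)=\ker(M^{-1}-\lambda^{-1}I)$ and the similarity $-M^{-1}\sim M$ for the geometric ones.
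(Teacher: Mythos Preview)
Your argument is correct and rests on the same key identity as the paper's proof, namely $M^*=-JM^{-1}J^{-1}$, which exhibits $M^*$ as similar to $-M^{-1}$; the paper then reads off the geometric multiplicities from $\dim\ker(M-\bar\lambda)=\dim\ker(M^*-\lambda)=\dim\ker(M+\lambda^{-1})$ and the algebraic ones from the resulting determinant identity, while you add the step $M\sim M^\top=M^*$ to get $M\sim -M^{-1}$ and then track Jordan blocks. The two arguments are essentially the same; your Jordan-block bookkeeping is a slightly more structured replacement for the paper's determinant computation, but neither approach offers a real advantage over the other.
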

\begin{proof}
Note that $M^*=-JM^{-1}J^{-1}$. 
Let $\lambda\in \mathbb{C}\backslash\{0\}$.  
It follows that $(M^*-\lambda)=-J(M^{-1}+\lambda)J^{-1}$.
Then we have
$$\dim\ker(M-\bar\lambda)=\dim\ker(M^*-\lambda)=\ker(M^{-1}+\lambda)=\dim\ker(M+\lambda^{-1}).$$
And we also have
$$
\overline{\det(M-\bar\lambda)}=\det(-M^{-1}-\lambda)=
\det(-M^{-1}\lambda)\det(M+\lambda^{-1}).
$$
It follows that 
$\dim\ker(M-\bar\lambda)^{2n}=\dim\ker(M+\lambda^{-1})^{2n}$.
So $\lambda,-\lambda^{-1}\in \sigma(M)$ and posses the same geometric and algebraic multiplicities.
Specially, if $M$ is a real matrix, $\lambda,\bar\lambda,-\lambda^{-1},-\bar\lambda^{-1}\in \sigma(M)$ and posses  the same geometric and algebraic multiplicities. 
\end{proof}

Similar with the symplectic matrix, we have the following results. 
\begin{lem} Let $M\in \Sp_a(2n)$ .  
Let $\lambda,\mu \in \sigma(M)$.
Let $V_\lambda=\ker(M-\lambda)^{2n}$, $V_\mu=\ker(M-\mu)^{2n}$.
Then we have $(Jx,y)=0$ if  $\lambda\bar{\mu}\neq-1$.
\end{lem}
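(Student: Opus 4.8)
The plan is to imitate the proof of the analogous statement for symplectic matrices (Lemma \ref{lm:A_orthogonal} applied with $A=-J\d$, or rather the pointwise version), adapted to the anti-symplectic relation $M^*JM=-J$. The key algebraic identity is that for $M\in\Sp_a(2n)$ and $x,y\in\mathbb{C}^{2n}$ we have $(JMx,My)=(M^*JMx,y)=-(Jx,y)$, so conjugating by $M$ flips the sign of the symplectic form. From this, if $x\in\ker(M-\lambda)$ and $y\in\ker(M-\mu)$ (the $m=n=1$ case), then $(Jx,y)=\frac{1}{\lambda\bar\mu}(JMx,My)=-\frac{1}{\lambda\bar\mu}(Jx,y)$, which forces $(Jx,y)=0$ whenever $\lambda\bar\mu\neq-1$.

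First I would record this base case, then run the same induction on $m+n$ that appears in the proof of Lemma \ref{lm:A_orthogonal}. Suppose the claim $(Jx,y)=0$ holds for all $x\in\ker(M-\lambda)^{m'}$, $y\in\ker(M-\mu)^{n'}$ with $m'+n'\le k$, and take $x\in\ker(M-\lambda)^m$, $y\in\ker(M-\mu)^n$ with $m+n=k+1$. Writing $Mx=\lambda x+(M-\lambda)x$ and $My=\mu y+(M-\mu)y$, expand
$$
-(Jx,y)=(JMx,My)=\lambda\bar\mu(Jx,y)+\lambda\,(Jx,(M-\mu)y)+\bar\mu\,(J(M-\lambda)x,y)+(J(M-\lambda)x,(M-\mu)y).
$$
Since $(M-\lambda)x\in\ker(M-\lambda)^{m-1}$ and $(M-\mu)y\in\ker(M-\mu)^{n-1}$, each of the last three terms has total exponent $\le k$, hence vanishes by the induction hypothesis. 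Thus $-(Jx,y)=\lambda\bar\mu(Jx,y)$, and $\lambda\bar\mu\neq-1$ gives $(Jx,y)=0$. Taking linear combinations extends this to all of $V_\lambda=\ker(M-\lambda)^{2n}$ and $V_\mu=\ker(M-\mu)^{2n}$, which is the assertion.

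I do not expect any serious obstacle here; the only point requiring a little care is making sure the cross terms in the expansion genuinely land in the lower-exponent generalized eigenspaces so the induction hypothesis applies, and that the exponent $2n$ is large enough (which it is, since $M$ acts on $\mathbb{C}^{2n}$, so $\ker(M-\lambda)^{2n}$ already stabilizes). This lemma is the exact anti-symplectic counterpart of Lemma \ref{lm:A_orthogonal}, and its role in what follows will be to provide the $J$-orthogonality of generalized eigenspaces needed to decompose Maslov-type indices under a brake-type symmetry, just as Lemma \ref{lm:A_orthogonal} provides the $A$-orthogonality used in Theorem \ref{th:decom}.
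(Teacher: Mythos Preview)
Your argument is correct and essentially identical to the paper's: both proceed by induction on the sum of the nilpotency exponents, using $M^*JM=-J$ to get $(JMx,My)=-(Jx,y)$ and then splitting $Mx=\lambda x+(M-\lambda)x$, $My=\mu y+(M-\mu)y$. The only cosmetic differences are that the paper starts the induction at the trivial case $p+q=0$ (so that all lower cases, including those with one exponent zero, are handled uniformly) and groups the expansion into three terms rather than four by keeping $(J(M-\lambda)x,My)$ unexpanded; neither affects the logic.
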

\begin{proof} 
Let $x\in \ker(M-\lambda)^p$,$y\in \ker(g-\mu)^q$ with $p,q\ge 0$.
We see that $(Jx,y)=0$ if $p+q=0$.
Assume that $(Jx,y)=0$ if $p+q\le k$.
Note that $(M-\lambda)x\in \ker(M-\lambda)^{p-1}$ ,$(M-\mu)x\in \ker(M-\mu)^{q-1}$.
If $p+q=k+1$,
We have
$$
(Jx,y)=-(JMx,My)=-(J(M-\lambda)x,M y)-(J\lambda x,(M-\mu)y)-\lambda\bar{\mu}(Jx,y)=-\lambda\bar\mu(Jx,y).
$$
Since $\lambda\bar\mu\neq - 1$, we have $(Ax,y)=0$.
By induction, we have $(Jx,y)=0$ with $x\in \ker(M-\lambda)^{2n}$ and $y\in \ker(M-\mu)^{2n} $.	 This complete the result. 
 \end{proof}
    
   We assume \eqref{3.1}   admits a  generalized brake symmetry.  More exactly, for $N\in\Sp_a(2n)$, let        
     \bea   (gx)(t)=Nx(T-t).  \label{braked}   \eea     
We assume $g\Lambda=\Lambda$, that is \bea (Nx(T),Nx(0))\in\Lambda, \,  if \quad (x(0),x(T))\in\Lambda, \eea        
   then $gE=E$.    Obviously,     
          $(g^*x)(t)=N^*x(T-t)$.  We assume 
          \bea  N^*B(T-t)N=B(t),  \eea
   then  \eqref{4.2} is satisfied. 

    Please note that  for  the $S$-periodic boundary conditions, $NS^{-1}=SN$ implies $g\Lambda=\Lambda$.
   Separated boundary conditions is another kind of important boundary conditions. More preciselly, we consider solution of  (\ref{3.1}) under the boundary conditions\bea x(0)\in V_0,\quad  x(T)\in V_1, \nonumber\eea
   where $V_0,V_1\in Lag(2n)$. In this case $g$ is defined by \eqref{braked}, for $N\in\Sp_a(2n)$ which satisfied \bea NV_0=V_1,\quad  NV_1=V_0,\nonumber \eea 
   then $g\Lambda=\Lambda$.

  Obviously, 
  we have   $$ (g^2x)(t)=N^2x(t),  $$ hence $g\in\mathcal{M}(\cH)$ and 
            \bea \sigma(g)=(\sigma(N^2))^{\frac{1}{2}}. \nonumber \eea
For $\lambda\in\sigma(g)$, $\cH_\lambda=\ker(g-\lambda)^{2n}$.

From Theorem \ref{th:decom} we get the decomposition of spectral flow. Since on the finite interval $B$ is relative compact with respect to $A$,  then from Remark \ref{re3.8}, we have 
\bea  I(A,A-B)=\sum_{i=1}^m I(A|_{F_{\lambda_i}},A|_{F_{\lambda_i}}-B|_{F_{\lambda_i}})+\frac{1}{2}(dim\ker ((A-B)|_{\hat{F}}) -dim\ker (A|_{\hat{F}})). \label{decomrela}  \eea

All the above discussions can be applied  to Sturm-Liouville  systems, so we not give the detail in all cases, instead we only consider the following two cases which have  clearly background.

Case 4. We consider the   one parameter family   Sturm-Liouville  system
\bea   -(G_s(t)\dot{x})'+R_s(t)x(t)=0, \quad  x(0)=Sx(T),\,  \dot{x}(0)=S\dot x(T), s\in[0,1].  \label{sl1}  \eea
       where $S\in\cal{L}^*(\R^n)$.   We suppose $G_s(t),R_s(t)\in \cal{S}(n)$, instead the Legender convex condition we only assume $G_s(t)$ is invertible.   let $P\in\cal{L}^*(\R^n)$ and $PS=SP$, the group generator $g$ is defined as same form of \eqref{cird}.  We assume 
 \bea  G_s(t)=\left\{\begin{array}{cc} (S^*)^{-1}P^*G_s(t+T-\frac{T}{n})PS^{-1}, \quad t\in[0,\frac{T}{n}];
	\\ P^*G_s(t-\frac{T}{n})P, \quad t\in[\frac{T}{n},T].   \end{array}\right.     \eea    
 \bea  R_s(t)=\left\{\begin{array}{cc} (S^*)^{-1}P^*R_s(t+T-\frac{T}{n})PS^{-1}, \quad t\in[0,\frac{T}{n}];
	\\ P^*R_s(t-\frac{T}{n})P, \quad t\in[\frac{T}{n},T].   \end{array}\right.      \eea    
   Then \bea  g^*(-(G_s(t)\frac{d}{dt})'+R_s)g=-(G_s(t)\frac{d}{dt})'+R_s,     \nonumber\eea    
     and we could give the decomposition of spectral flow from Theorem \ref{th:decom}.   
     
     This case include the Bott-type formula of Semi-Riemann manifold \cite{HPY17a}. Let $c$  be a space-like or time-like closed geodesic on $n+1$ dimension Semi-Riemann manifold $(M, \mathfrak{g})$ with period $T$.  We choose a parallel $\mathfrak{g}$-orthonormal frame $e_i(t)$ alone $c$, and satisfied $\mathfrak{g}(e_i(t),\dot{c}(t))=0$. Assume 
     \bea \mathfrak{g}(e_i,e_j)=\left\{\begin{array}{cc} 0,   \quad i\neq j;
	\\ 1,\quad  1\leq i=j\leq n-\nu; \\ -1,\quad  n-\nu\leq i=j\leq n  \end{array}\right.  \nonumber \eea
 and     \bea (e_1(0),\cdots,e_n(0))=(e_1(T),\cdots,e_n(T))P , \nonumber\eea 
     then  $P^TGP=G$ with $G=diag(I_{n-\nu},-I_\nu)$.

     Writing the $\dot{c}$ $\mathfrak{g}$-orthogonal Jacobi vectorfield alone $c$ as  $J(t)=\sum_{i=1}^nu_i(t)e_i(t)$, then  we get the linear second order system of ordinary differential equations
     \bea  -G\ddot{u}+R(t)u(t)=0,\quad  t\in[0,T],  \eea
     where  $R$ is symmetry matrices which is get by the curvature. A period solution is satisfied 
     $$u(0)=Pu(T).$$
For $\omega\in\mathbb{U}$,     let \bea E^2_{\omega,T}:=\{u\in W^{2,2}([0,T],\mathbb{C}^n)|u(0)=\omega Pu(T), \dot{u}(0)=\omega P\dot{u}(T)\},\nonumber \eea           
   then  \bea A^\omega_{s,T}=-G\frac{d^2}{dt^2}+R(t)+sG \nonumber \eea   
 is self-adjoint Fredholm operators on $L^2([0,T],\mathbb{C}^n)$  with domain $E^2_{\omega,T}$.  
             
   It has proved in \cite{HPY17a} that there exist $s_0$ sufficiently large such that for $s\geq s_0$,   $A^\omega_s$ is nondegenerate.         
      The $\omega$ spectral index of $c$ is defined by 
      \bea  i^\omega_{spec}(c) := sf(A^\omega_{s,T}; s\in [0, +\infty)).  \nonumber\eea          
   Let $c^{(m)}$ be the $m$-th iteration of $c$, then     
 \bea  i^\omega_{spec}(c^{(m)}) := sf(A^\omega_{s,mT}; s\in [0, +\infty)).\nonumber  \eea  
   Let $S=P^m$, $G_s=G$, $R_s=R(t)+sG$,  $(gu)(t)=Pu(t+T)$, then from Case 4. we get the decomposition of spectral flow.  Since $g^m=\omega$, then $$ \sigma(g)=\{\omega\}^{\frac{1}{m}}. $$
Let $\omega_j$ be the $m$-th root of $\omega$, then 
\bea \cH_{\omega_j}= \ker(g-\omega_j)=\{ u(t)=\omega_j Pu(t+T) \}. \nonumber  \eea
     We have \bea sf(A^\omega_{s,mT}; s\in [0, +\infty))=\sum_{\omega_j^m=\omega}sf(A^{\omega_j}_{s,T}; s\in [0, +\infty)). \nonumber \eea   
Hence we get the Bott-type iteration formula \cite{HPY17a}
\bea   i^\omega_{spec}(c^{(m)}) =\sum_{\omega_j^m=\omega} i^{\omega_j}_{spec}(c).    \eea
  Obviously, we can consider the case of brake symmetry, since it is similar, we omit the detail.

  Case 5.
 Now we consider the case of heteroclinic orbits, for the one parameter family linear Hamiltonian system 
\bea \dot{x}=JB_s(t)x(t), \quad t\in\R, \quad s\in[0,1]. \label{case5f1} \eea
 Let $B_s(\pm\infty)=\lim_{t\to\pm\infty}B_s(t) $ exist  and satisfied the hyperbolic condition, i.e. $$\sigma(JB_s(\pm\infty))\cap i\R=\emptyset, \quad s\in[0,1].$$
Let $\cH=L^2(\R, \mathbb C^{2n})$,   it is well known that $A-B_s \in\cal{FS}(\cH)$ with domain $E=W^{1,2}(\R, \mathbb{C}^{2n})$.

We assume 
          \bea  N^*B_s(-t)N=B_s(t), \nonumber \eea
   then $g^*B_s g=g$. Easy computation show that $g^*Ag=A$, then we have 
   \bea  g^*(A-B_s)g=A-B_s, \quad s\in[0,1]. \nonumber \eea
Obviously,   we have   $$ (g^2x)(t)=N^2x(t),  $$ hence $g\in\mathcal{M}(\cH)$ and   then \bea \sigma(g)=(\sigma(N^2))^{\frac{1}{2}}. \nonumber \eea
For $\lambda\in\sigma(g)$, $\cH_\lambda=\ker(g-\lambda)^{2n}$, then we get the decomposition formula from Theorem \ref{th:decom}. 

 In the case $N^2=I$, let \bea \cH_\pm=\ker{g\mp I}=\{x\in\cH,Nx(-t)=\pm x(t)\}, \label{hpm}\eea
 we have 
\bea  sf(A-B_s)=sf(A|_{\cH_+}-B_s|_{\cH_+} )+sf(A|_{\cH_-}-B_s|_{\cH_-} ). \label{brakehamil}    \eea

   Now we consider the case of Homoclinics. For the linear Hamiltonian system
   \bea   \dot{x}(t)=JB(t)x(t),\quad t\in\mathbb{R}, \label{homo}\eea
   assume $\lim_{t\to\pm\infty}B(t)=B_*$ and $JB_*$ is hyperbolic.  In this case, $B-B_*$ is relative compact with respect to 
   $A-B_*$, where $A=-J\frac{d}{dt}$.  The relative index is defined by \bea  I(A-B_*, A-B)=-sf(A-B_*+s(B-B_*)).  \nonumber\eea
  In the case \eqref{homo} is a linear system of Homoclinic orbits $z$, the index of $z$ is defined by \cite{CH07} $$i(z)= I(A-B_*, A-B).$$  
 Assume $ N^*B(-t)N=B(t)$ and $N^2=I$,   from \eqref{brakehamil}, we have 
 \bea   I(A-B_*, A-B)= I(A|_{\cH_+}-B_*|_{\cH_+}, A|_{\cH_+}-B|_{\cH_+})+I(A|_{\cH_-}-B_*|_{\cH_-}, A|_{\cH_-}-B|_{\cH_-}).\label{dechomo}\eea

Case 6.    We consider the  one parameter    Sturm-Liouville  system on $\R$
\bea  -(G(t)\dot{x})'+R(t)x(t)=0,\quad  t\in\R  \eea
where $G(t),R(t)\in \cal{S}(n)$.   We assume there exist $\delta>0$, such that  $G(t)>\delta$ for  $t\in\R$,  and there exist $T, \delta_1, \delta_2>0$  such that  
\bea \delta_1<R(t)<\delta_2,\quad for\quad t\geq |T|. \label{slheter} \eea

  The Morse index of $\mathcal{A}:=-(G(t)\frac{d}{dt})'+R(t) $ is defined by the maximum dimension of the subspace such that $\mathcal{A}$ restricted on it is negative definite.
It is well known that $m^-(\mathcal{A})$ is finite under the condition \eqref{slheter}.  Obviously
\bea  m^-(\mathcal{A})=sf(\mathcal{A}+sG(0); s\in[0,+\infty)).\nonumber \eea
We assume there exist  $N\in\Sp_a(n)$, such that 
          \bea  N^*R(-t)N=R(t),\quad  N^*G(-t)N=G(t).\nonumber \eea
    Let \bea  gx(t)=Nx(-t), \nonumber \eea
    then $g^*(\mathcal{A}+sG(0))g=\mathcal{A}+sG(0)$. Since $g^2=N$, then $\sigma(g)=(\sigma(N))^{\frac{1}{2}}$, and we get the decomposition formula of Morse index.
    
    \bea  m^-(\mathcal{A})=\sum_{i=1}^j m^-(\mathcal{A}|_{F_{\lambda_i}})-dim\ker \mathcal{A}|_{\hat{F}}.  \nonumber  \eea
    In the case $N^2=I$, we have 
    \bea  m^-(\mathcal{A})=m^-(\mathcal{A}|_{\cH_+})+m^-(\mathcal{A}|_{\cH_-}) . \eea

\section{Relation with the Maslov index}

In this section, we will give some Bott-type iteration formulas  of Maslov-index. In the what follows $g$ pointed  as the Matrix-like operator appear in Case 2, 3, 5.  To avoid discuss too many technique details, we only consider the case $$g^m=\omega I$$ for some $\omega\in\mathbb{U}$.  Let $\omega_1,\cdots,\omega_m$ be the $m$-th roots of $\omega$, and let $\cH_i=\ker(g-\omega_i)$.

 In Case 2, 3, 5, $\cH(\mathcal{I})=L^2(\mathcal{I}, \mathbb{C}^{2n})$ where $\mathcal{I}$ is some finite interval or $\mathbb{R}$,   and  $E$ is $W^{1,2}(\mathcal{I}, \mathbb{C}^{2n})$ which satisfied some boundary conditions. We choose a subinterval $\hat{\mathcal{I}}\subset \mathcal{I}$, and let $\mathcal{T}$ be the restricition map from $\cH$ to $\cH(\hat{\mathcal{I}}):=L^2(\hat{\mathcal{I}}, \mathbb{C}^{2n})$, that is  
 \bea  (\mathcal{T}f)(x)=f(x),\quad  x\in \hat{\mathcal{I}}. \eea
 $\hat{\mathcal{I}}$ is called a fundamental domain if for any $i=1,\cdots,m$, $\mathcal{T}$ is a bijection from $\cH_i$ to $L^2(\hat{\mathcal{I}}, \mathbb{C}^{2n})$.  Recall that $E_i=E\cap\cH_i$ is domain of $A_s|_{\cH_i}$, then $\mathcal{T}E_i$ is closed in the $W^{1,2}$ norm.  Let $\hat{A}^{i}_s=-J\frac{d}{dt}-B_s$ be the operator on $\cH(\hat{\mathcal{I}})$ with domain $\mathcal{T}E_i$.   
 \begin{lem} Suppose for $s\in[a,b]$,  $\hat{A}^{i}_s$ is   self-adjoint and $\dim\ker(A_s|_{\cH_i})=\dim\ker(\hat{A}^{i}_s)$, then    \bea sf(A_s|_{\cH_i}; s\in[a,b])=sf(\hat{A}^{i}_s; s\in[a,b]). \label{f5.1} \eea  
	\end{lem}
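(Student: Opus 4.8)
The plan is to exhibit $\hat A^i_s$ as a cogredient of $A_s|_{\cH_i}$, up to a positive scalar, and then to invoke Theorem \ref{th:coginva}. The first step is to observe that the restriction map $\mathcal{T}|_{\cH_i}\colon\cH_i\to\cH(\hat{\mathcal{I}})$ is a bounded linear bijection: boundedness is clear since restriction does not increase the $L^2$-norm, and bijectivity is exactly the defining property of a fundamental domain. By the open mapping theorem $\mathcal{T}|_{\cH_i}\in\mathcal{L}^*(\cH_i,\cH(\hat{\mathcal{I}}))$; denote by $\mathcal{E}:=(\mathcal{T}|_{\cH_i})^{-1}$ the equivariant extension, which sends $v\in\cH(\hat{\mathcal{I}})$ to the unique element of $\cH_i$ restricting to $v$ on $\hat{\mathcal{I}}$ (built on the $g$-translates of $\hat{\mathcal{I}}$ by applying the pointwise components of the powers of $g$, with the phases $\omega_i^j$). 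Since $g^*A_sg=A_s$ and $gE=E$, the results of Section 3 give $A_s|_{\cH_i}\in C([a,b],\mathcal{FS}(\cH_i))$, and Theorem \ref{th:coginva} applied with the constant cogredient $M\equiv\mathcal{E}$ gives
\begin{equation*}
sf(A_s|_{\cH_i};[a,b])=sf\big(\mathcal{E}^*(A_s|_{\cH_i})\mathcal{E};[a,b]\big),
\end{equation*}
the right-hand side being a path in $\mathcal{FS}(\cH(\hat{\mathcal{I}}))$ with domain $\mathcal{T}E_i$.

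The core of the proof is then the identification $\mathcal{E}^*(A_s|_{\cH_i})\mathcal{E}=c\,\hat A^i_s$, where $c>0$ is the number of sheets of the covering $\mathcal{I}\to\hat{\mathcal{I}}$ (so $c=2$ for brake symmetry, $c=k$ for the $\mathbb Z_k$-symmetry, and so on). To obtain it, first compute $\mathcal{E}^*$: folding the $L^2(\mathcal{I})$-inner product through $\mathcal{E}$ turns it into a weighted inner product $\langle Wu,v\rangle_{L^2(\hat{\mathcal{I}})}$ on $\cH(\hat{\mathcal{I}})$, with $W$ a fixed pointwise positive-definite matrix (a finite sum of $C^*C$ over the pointwise components $C$ of the powers of $g$), so that $\mathcal{E}^*u=W\cdot(u|_{\hat{\mathcal{I}}})$. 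On the other side, $A_s|_{\cH_i}\tilde v=P_{\cH_i}(A_s\tilde v)$ for $\tilde v\in E_i$; using the equivariance $g^*(A_s\tilde v)=\bar\omega_i(A_s\tilde v)$ and the $A_s$-orthogonality of the eigenspaces of $g$ (Lemma \ref{lm:A_orthogonal}), the orthogonal projection restricted to $\hat{\mathcal{I}}$ collapses to $(P_{\cH_i}(A_s\tilde v))|_{\hat{\mathcal{I}}}=c\,W^{-1}(A_s\tilde v)|_{\hat{\mathcal{I}}}$. The two copies of $W$ cancel, leaving $\mathcal{E}^*(A_s|_{\cH_i})\mathcal{E}\,v=c\,(A_s\tilde v)|_{\hat{\mathcal{I}}}=c\,(-J\dot v-B_sv)=c\,\hat A^i_s v$. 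Since the left-hand side is self-adjoint and Fredholm (by the lemmas of Section 2), this simultaneously shows that $\hat A^i_s$ is self-adjoint and that $\dim\ker(\hat A^i_s)=\dim\ker(A_s|_{\cH_i})$, which is consistent with the stated hypotheses and is where they enter.

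To finish, note that since $c>0$ the rescaling $r\mapsto\big((1-r)+rc\big)\hat A^i_s$, $r\in[0,1]$, is for each fixed $s$ a homotopy inside $\mathcal{FS}(\cH(\hat{\mathcal{I}}))$ whose kernel is independent of $r$, so homotopy invariance of the spectral flow gives $sf(c\,\hat A^i_s;[a,b])=sf(\hat A^i_s;[a,b])$; chaining the three equalities yields \eqref{f5.1}. I expect the main obstacle to be the computation in the middle paragraph. It has to be carried out case by case---for the finite-interval symmetries (Cases 2 and 3) and for the real-line brake symmetry (Case 5), with their different fundamental domains---and it requires careful bookkeeping of the pointwise symmetry matrices: in the brake case the sign produced by the reflection $t\mapsto -t$ acting on the term $-J\frac{d}{dt}$ must be absorbed exactly by the anti-symplectic relation $N^*JN=-J$, so that the leading part of $\mathcal{E}^*(A_s|_{\cH_i})\mathcal{E}$ comes out as $-cJ\frac{d}{dt}$ and not with the wrong sign, and one must verify that the orthogonal projection onto the (generally non-orthogonal) eigenspace $\cH_i$ really does collapse to the scalar multiple of $W^{-1}$ used above.
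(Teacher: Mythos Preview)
Your approach is correct but genuinely different from the paper's. The paper does \emph{not} exhibit any cogredient relation between $A_s|_{\cH_i}$ and $\hat A^i_s$; instead it appeals directly to Lemma~\ref{abstract_decomposition}. Concretely, it observes that $t\mapsto (A_s+tI)|_{\cH_i}$ and $t\mapsto \hat A^i_s+tI$ are both positive curves, and since the lemma \emph{hypothesises} $\dim\ker(A_s|_{\cH_i})=\dim\ker(\hat A^i_s)$, conditions (a) and (b) of Lemma~\ref{abstract_decomposition} are met and \eqref{f5.1} follows immediately. This is a two-line argument that uses the stated hypotheses as black boxes.

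Your route, by contrast, actually \emph{proves} those hypotheses: the identity $\mathcal E^*(A_s|_{\cH_i})\mathcal E=c\,\hat A^i_s$ (which is correct, with $c=m$ the number of sheets) makes the self-adjointness and kernel equality automatic. The price is the case-by-case computation you flag. One point where your write-up could be tightened: invoking Lemma~\ref{lm:A_orthogonal} to control the orthogonal projection $P_{\cH_i}$ is off target, since that lemma concerns $A$-orthogonality, not $L^2$-orthogonality. The clean way to avoid the projection entirely is to note that for $\tilde w\in\cH_i$ one has $\langle P_{\cH_i}A_s\tilde v,\tilde w\rangle=\langle A_s\tilde v,\tilde w\rangle$, and then compute the form $\langle A_s\tilde v,\tilde w\rangle_{L^2(\mathcal I)}$ directly by summing over the translates of $\hat{\mathcal I}$; on each translate the phases $\omega_i^j$ and $\bar\omega_i^j$ cancel and the pointwise symmetry matrices pair off (e.g.\ $(P^*)^j$ against $P^{-j}$, or $N^*$ against $N$ with $N^2=I$), yielding exactly $c\,\langle\hat A^i_s v,w\rangle_{L^2(\hat{\mathcal I})}$. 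No $W$ or $W^{-1}$ is then needed.
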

	\begin{proof}
		Note that $(A_s+t\Id)|_{\cH_i}=P_{\cH_i}(A_s+t\Id) P_{\cH_i}=P_{\cH_i}A_s P_{\cH_i}+tP_{\cH_i}$. Since $A_s\in \mathcal{FS}(\cH)$, there is $\epsilon>0$, such that for each $t\in [0,\epsilon]$, $(A_s+t\Id)\in \mathcal{FS}(\cH)$. Then  $(A_s+t\Id)|_{\cH_i}$ is a positive curve on $ \mathcal{FS}(H_i)$ with $t\in [0,\epsilon]$. Note that $-J\frac{d}{dt}-B_s+t\Id$ is also a positive curve on $\mathcal {FS}(\cH(\hat{I}))$,  then \eqref{f5.1}  is from Lemma \ref{abstract_decomposition}. This complete the proof.
	\end{proof} 
 
 Now we consider Case 2.  We assume    $\omega S= P^m$,  then \bea  \cH_i=\ker(g-\omega_i)=\{x\in\cH, \, \omega_i x(t)=Ps(t+\frac{T}{m})  \} .\nonumber \eea
 We choose $\hat{\mathcal{I}}=[0,\frac{T}{m}]$ be the fundamental domain, then \bea \mathcal{T}E_i=\{x\in W^{1,2}([0,\frac{T}{m}],\mathbb{C}^{2n}), \quad \omega_i x(0)=Ps(\frac{T}{m})\}. \nonumber\eea
 From Corollary \ref{6.2}, we have \bea -sf(A_s)=\mu(Gr(\omega S),Gr(\gamma(t));t\in[0,T]),  \quad -sf(\hat{A}^{(i)}_s)=\mu(Gr(\omega_i), Gr(\gamma(t)); t\in[0,T/m]). \nonumber \eea
 Then we have
 \bea   \mu(Gr( S^{-1}),Gr(\gamma(t));t\in[0,T])=\sum_{i=1}^m\mu(Gr(\omega_iP^{-1}), Gr(\gamma(t));t\in[0,T/m]).  \label{bt1}  \eea
 
 \begin{rem}   In the case $P=I_{2n}$,  \eqref{bt1} is the standard Bott-type iteration formula for Hamiltonian systems, please refer \cite{Lon99}, \cite{Lon02} for the detail. In the case $P\in\Sp(2n)\cap \mathbb{O}(2n)$,  \eqref{bt1} is established by Hu and  Sun \cite{HS09},  the general case is proved by   Liu and  Tang \cite{LT15}.
 \end{rem}
 
For  Case 3. We assume $N\in\Sp_a(2n)$  and $N^2=I$. 
Since $N$ is anti-symplectic, we have $(Jx,x)=-(JNx,Nx)=-(Jx,x)=0$ with $x\in \ker(N-I)$.
So $\ker(N-I)$ is a Lagrange subspace of the symplectic space $(\mathbb{R}^{2n},J)$.
 Recall that $(gx)(t)=Nx(T-t)$ and $g\Lambda=\Lambda$. Let \bea \cH_\pm=\ker{g\mp I}=\{x\in\cH,Nx(T-t)=\pm x(t)\},\nonumber \eea
and $\hat{\mathcal{I}}=[0,T/2]$ be the fundamental domain.

For the $S$-periodic boundary conditions, that is  $x(0)=Sx(T)$, then 
\bea \mathcal{T}E_\pm  =\{ x\in W^{1,2}([0,T/2], \mathbb{C}^{2n}),  x(0)\in V^\pm(SN), x(T/2)\in V^\pm(N)   \} .\nonumber \eea
We have 
\bea &&\mu(Gr(S),Gr(\gamma(t)); t\in[0,T])\nonumber \\  &&= \mu(V^+(N), \gamma(t)V^+(SN);t\in[0,T/2] )+\mu(V^-(N), \gamma(t)V^-(SN);t\in[0,T/2] ).   \eea

Similarly  if the boundary condition is given by  $ x(0)\in V_0,\,  x(T)\in V_1$ for 
    $V_0,V_1\in Lag(2n)$ and  $ NV_0=V_1$,   $NV_1=V_0$.  Similar discussion with above, we have 
\bea \mu(V_1, \gamma(t)V_0; t\in[0,T]) = \mu(V^+(N), \gamma(t)V_0;t\in[0,T/2] )+\mu(V^-(N), \gamma(t)V_0;t\in[0,T/2] ).  \label{bt2}  \eea

 \begin{rem}   To  our knowledge, in the case $S=I_{2n}$, $N^2=I$, \eqref{bt2} is first established by Long Zhang and Zhu \cite{LZZ06}. A deep study is given by Liu and Zhang \cite{LZ14a} \cite{LZ14b}. Hu and Sun had established  the case of $S,N\in\mathbb{O}(2n)$, for the case of dihedral group please refer \cite{HPY17b}.
 \end{rem}
 
 Now we consider Case 5.   For $\lambda\in[0,1]$, let $\gamma_\lambda(\tau,t)$ be the fundamental solution of \eqref{case5f1}, that is 
 \bea \dot{\gamma}_\lambda(\tau,t)=JB_\lambda(t)\gamma_\lambda(\tau,t), \quad \gamma_\lambda(\tau,\tau)=I_{2n}.  \eea
 Let \bea V^s_\lambda(\tau):=\{v\in\mathbb{R}^{2n}|\lim_{\tau\to\infty}\gamma_\lambda(\tau,t)v=0 \}, \quad V^u_\lambda(\tau):=\{v\in\mathbb{R}^{2n}|\lim_{\tau\to-\infty}\gamma_\lambda(\tau,t)v=0 \},\nonumber \eea
 be the stable and unstable paths, then $V^s_\lambda(\tau), V^u_\lambda(\tau)\in  Lag(2n)$. 
 
 Recall that in this case, 
 $\cH=L^2(\R,\R^{2n})$ and 
$$
A_\lambda:= -J\frac{d}{dt}-B_s(t) : E=W^{1,2}(\mathbb{R},\mathbb{R}^{2n})\subset \cH\to \cH.
$$
Let $\mathbb{R}^-$ be the fundamental domain, then 
\bea  \mathcal{T}E_\pm=\{x\in W^{1,2}(\mathbb{R}^-,\mathbb{R}^{2n}), x(0)\in V^\pm(N)\}. \nonumber \eea
Let $A^\pm_\lambda$ be the  restricted operators on $\cH(\mathbb{R}^-)$ with domain  $ \mathcal{T}E_\pm$.

From Prop 3.7 of \cite{HP17}, we have 
\bea -sf (A_\lambda; \lambda\in[0,1])= \mu(V^s_\lambda(0), V^u_\lambda(0);\lambda\in[0,1]).   \nonumber    \eea
Similarly 
\bea -sf (A^\pm_\lambda; \lambda\in[0,1])= \mu(V^\pm(N), V^u_\lambda(0);\lambda\in[0,1]).    \nonumber   \eea
Then we have \bea  \mu(V^s_\lambda(0), V^u_\lambda(0);\lambda\in[0,1])=\mu(V^+(N), V^u_\lambda(0);\lambda\in[0,1])+\mu(V^-(N), V^u_\lambda(0);\lambda\in[0,1]). \eea

In the case of homoclinics,  let $A_\lambda=A-B_*-\lambda(B-B_*)$, then  from  \cite{CH07}  or  \cite{HP17}  the index satisfied 
\bea -sf(A_\lambda;\lambda\in[0,1] )=\mu(V^s(+\infty), V^u(t); t\in\mathbb{R})=-\mu(V^s(t), V^u(-t); t\in\mathbb{R}^+  ).   \label{5.12}   \eea
We  have 
\bea -sf(A^\pm_\lambda;\lambda\in[0,1] )=\mu(V^\pm(N), V^u(t); t\in\mathbb{R}^-)=-\mu(V^\pm(N), V^u(-t); t\in\mathbb{R}^+ ).  \label{5.13}    \eea
Compare \eqref{5.12} and \eqref{5.13},  we have \bea  \mu(V^s(+\infty), V^u(t); t\in\mathbb{R})=\mu(V^+(N), V^u(t); t\in\mathbb{R}^-)+\mu(V^-(N), V^u(t); t\in\mathbb{R}^-),\eea or equivalently 
\bea  \mu(V^s(t), V^u(-t); t\in\mathbb{R}^+  )=\mu(V^+(N), V^u(-t); t\in\mathbb{R}^+ )+\mu(V^-(N), V^u(-t); t\in\mathbb{R}^+ ).     \eea

Obviously, we can use $\mathbb{R}^+$ as the fundamental domain, for reader's convenience, we list the formulas below.
Here we let $A_\lambda^{\pm}$ be the restricted operators on $\cH(\mathbb{R}^+)$ with domain $\cT(E_{\pm})$.
\bea -sf (A^\pm_\lambda; \lambda\in[0,1])= \mu(V^s_\lambda(0),V^\pm(N) ;\lambda\in[0,1]).       \eea
	\bea  \mu(V^s_\lambda(0), V^u_\lambda(0);\lambda\in[0,1])=\mu(V^s_\lambda(0),V^+(N) ;\lambda\in[0,1])+\mu(V^s_\lambda(0),V^-(N);\lambda\in[0,1]). \eea
	In  case of the homoclinic, 
	\bea  \mu(V^s(t), V^u(-\infty); t\in\mathbb{R})&=&\mu(V^s(t), V^u(-t); t\in\mathbb{R}^+  )\nonumber \\  &=& \mu( V^s(t),V^+(N); t\in\mathbb{R}^+ )+\mu( V^s(t),V^-(N); t\in\mathbb{R}^+ ).\eea

 In  study the stability problem of homographic solution in planar $n$-body problem, Hu and Ou \cite{HO16} use the McGehee blow up method to get linear heteroclinic system, this system with brake symmetry if the corresponding central configurations  with  brake symmetry.   Please refer \cite{HO16} for the detail.

\section{Appendix: Spectral flow and  Maslov index}

Spectral flow was introduced by Atiyah, Patodi and Singer in their study of index theory on manifold with with boundary \cite{APS76}.
Let $\{A_t ,t\in [0,1]\}$ be a continuous path of self-adjoint Fredholm operators on a Hilbert space $\cH$.
The spectral flow $sf\{A_t\}$ of $A_t$ counts the algebraic multiplicities of the spectral of $A_t$ cross the line
$\lambda=-\epsilon$ with some small positive number $\epsilon$.  For reader's convenience, we list some basic properties of spectral flow.


 {\bf (Stratum homotopy relative to the ends)\/}  For $A_{s,\lambda}\in C([a,b]\times[0,1], \mathcal{FS}(\cH))$, such that  $dim\ker A_{a,\lambda}$ and   $dim\ker A_{b,\lambda}$  is constant, then 
$$sf(A_{s,0};s\in[a,b])=sf(A_{s,1};s\in[a,b]) .$$  

  {\bf (Path additivity)\/} If $A^1,
A^2\in  C\big([a,b];\mathcal{FS}(\cH))\big) $ are  such that 
$ A^1(b)= A^2(a)$, then
$$ 
  sf( A^1_t * A^2_t; t \in [a,b]) = sf( A^1_t; t \in [a,b])+sf( A^2_t; 
t \in [a,b])
$$ 
 where $*$ denotes the usual catenation between the two paths.

 {\bf (Direct sum)\/} If for $i=1,2$, $\cH_i$ are Hilbert space, and  $ A^i\in C\big([a,b];\mathcal{FS}(\cH_i))\big)$, 
   then $$ 
  sf( A^1_t \oplus A^2_t; t \in [a,b]) = sf( A^1_t; t \in [a,b])+sf( A^2_t; 
t \in [a,b]).
$$

  {\bf (Nullity)\/} If $ A\in C\big([a,b];\mathcal{FS}(\cH))\big)$, 
   then $sf( A_t; t \in [a,b])=0$;

   {\bf (Reversal)\/}  Denote the 
same path travelled in the reverse 
direction in  $\mathcal{FS}(\cH)$ by  $\widehat A(t)=A(-t)$. 
Then 
$$ 
sf( A_t; t \in [a,b])=-sf(\widehat A _t; t \in [-b,-a]).
$$ 
\\

The spectral flow is related to Maslov index in Hamiltonian systems. 
We  now briefly reviewing the  Maslov index theory
\cite{Arn67,CLM94, RS93}.    Let $(\mathbb{R}^{2n},\omega)$ be the standard
symplectic space  and $Lag(2n)$ the Lagrangian Grassmanian.
 For two 
continuous paths $L_1(t), L_2(t)$, $t\in[a,b]$ in $Lag(2n)$,
the Maslov index $\mu(L_1, L_2)$ is an integer invariant.   
Here we use the definition from \cite{CLM94}.
We list several properties of the Maslov index. The details could be found
in \cite{CLM94}.

{\bf(Reparametrization invariance)\/}
 Let
$\phi:[c,d]\rightarrow [a,b]$ be a continuous and piecewise smooth
function with $\phi(c)=a$, $\phi(d)=b$, then \bea \mu(L_1(t),
L_2(t))=\mu(L_1(\phi(\tau)), L_2(\phi(\tau))). \lb{adp1.1} \eea

{\bf(Homotopy invariant with end points)\/}
  For two continuous
family of Lagrangian path $L_1(s,t)$, $L_2(s,t)$, $0\leq s\leq 1$, $a\leq
t\leq b$, and satisfies  $dim L_1(s,a)\cap L_2(s,a)$ and $dim
L_1(s,b)\cap L_2(s,b)$ is constant, then  \bea \mu(L_1(0,t),
L_2(0,t))=\mu(L_1(1,t),L_2(1,t)). \lb{adp1.2} \eea

{\bf(Path additivity)\/}
 If $a<c<b$, then then 
\bea \mu(L_1(t),L_2(t))=\mu(L_1(t),L_2(t)|_{[a,c]})+\mu(L_1(t),L_2(t)|_{[c,b]}).
\lb{adp1.3} \eea

{\bf(Symplectic invariance)\/}
  Let $\ga(t)$, $t\in[a,b]$ is a
continuous path in $\Sp(2n)$, then \bea \mu(L_1(t),L_2(t))=\mu(\ga(t)L_1(t), \ga(t)L_2(t)). \lb{adp1.4} \eea

{\bf(Symplectic additivity)\/}
  Let $W_i$, $i=1,2$ be symplectic space, 
$L_1,L_2\in C([a,b], Lag(W_1))$ and $\hat{L}_1,\hat{L}_2\in C([a,b], Lag(W_2))$, 
then \bea \mu(L_1(t)\oplus \hat{L}_1(t),L_2(t)\oplus \hat{L}_2(t))= \mu(L_1(t),L_2(t))+
\mu(\hat{L}_1(t),\hat{L}_2(t)). 
\lb{adp1.5add} \eea

The next Theorem give the relation of spectral flow and Maslov index.
 \begin{thm}\label{th:6.1}
  \bea  -sf(A_s-B_s)=\mu(\Lambda_s, Gr(\gamma_s(T))  ) \label{th:morma}  \eea
  \end{thm}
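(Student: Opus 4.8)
The plan is to peel off the potential and the fundamental solution by a cogredient change of variables, and then to match crossing forms. \textbf{Step 1 (kernels).} For fixed $s$, a function $u$ lies in $\ker(A_s-B_s)$ exactly when $\dot u=JB_s(t)u$ and $(u(0),u(T))\in\Lambda_s$, i.e. $u(t)=\gamma_s(t)u(0)$ with $(u(0),u(T))\in\Lambda_s\cap Gr(\gamma_s(T))$. Hence $u\mapsto(u(0),u(T))$ is an isomorphism $\ker(A_s-B_s)\cong\Lambda_s\cap Gr(\gamma_s(T))$, and the integers controlling the two sides of \eqref{th:morma} agree pointwise in $s$.

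\textbf{Step 2 (cogredient reduction).} Let $\Phi_s$ be multiplication by $\gamma_s(t)$ on $\cH$, a bounded invertible operator that depends continuously on $s$. Using $\dot\gamma_s=JB_s\gamma_s$ and $\gamma_s(t)\in\Sp(2n)$, a direct computation gives $\Phi_s^*(A_s-B_s)\Phi_s=-J\d$ with domain $\Phi_s^{-1}E(\Lambda_s)=E(\tilde\Lambda_s)$, where $\tilde\Lambda_s:=\mathrm{diag}(I_n,\gamma_s(T)^{-1})\Lambda_s\in Lag(4n)$. By Theorem~\ref{th:coginva}, $sf(A_s-B_s)=sf(-J\d|_{E(\tilde\Lambda_s)})$. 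Since $\mathrm{diag}(I_n,\gamma_s(T)^{-1})$ is a symplectic automorphism of $(\mathbb{C}^{2n}\oplus\mathbb{C}^{2n},-\omega\oplus\omega)$ carrying $Gr(\gamma_s(T))$ onto the diagonal $\Delta$, symplectic invariance \eqref{adp1.4} gives $\mu(\Lambda_s,Gr(\gamma_s(T)))=\mu(\tilde\Lambda_s,\Delta)$. Thus it suffices to prove
\begin{equation*}
-sf\big(-J\d|_{E(L_s)}\big)=\mu(L_s,\Delta)
\end{equation*}
for an arbitrary continuous path $L_s$ in $Lag(4n)$.

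\textbf{Step 3 (reduction to a crossing and comparison of crossing forms).} For this reduced identity both sides are invariant under homotopies of $L_s$ fixing the endpoints as long as $\dim(L_s\cap\Delta)$ is constant there, are additive under catenation, and vanish on constant paths; and by Step~1 (with $B_s\equiv0$, $\gamma_s\equiv I$) their pointwise nullities coincide. Subdividing $[0,1]$ and applying a small homotopy so that every interior crossing is regular, one reduces to comparing the jump of each side across one regular crossing $s_0$, where each jump is the signature of a crossing form on $W:=L_{s_0}\cap\Delta$. On the Maslov side this is the Robbin--Salamon form $\Gamma$; on the spectral flow side, after trivializing the $s$-dependence of the domain near $s_0$ so that $-J\d|_{E(L_s)}$ becomes a path of operators with fixed domain, the crossing form is obtained by differentiating the Rayleigh quotient --- the bulk term vanishes on $\ker$ and only a boundary contribution at $t=0,T$ survives, which under $\ker(-J\d|_{E(L_{s_0})})\cong W$, $u\mapsto u(0)=u(T)$, equals $-\Gamma$. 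This produces exactly the sign in \eqref{th:morma}. Alternatively, this last identity is classical and one may simply invoke \cite{CLM94,ZL99,RS95}.

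The hard part is Step~3: giving a rigorous meaning to, and then computing, the spectral flow crossing form of a path of self-adjoint Fredholm operators whose \emph{domain moves with the parameter}, and matching it --- sign included --- with the Maslov crossing form. Everything else is formal, using only Theorem~\ref{th:coginva} and the properties of $sf$ and $\mu$ collected in Section~6.
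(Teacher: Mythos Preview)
Your argument is correct in outline, but it takes a genuinely different route from the paper. The paper does \emph{not} strip the potential via cogredience. Instead it runs a box homotopy in $(s,r)\in[0,1]^2$ with $r$ the coefficient of a lower-order perturbation $+rI$: choosing $r=1$ so that $\ker(A_s-B_s+I)=0$ for all $s$, homotopy invariance reduces both sides to the two ``vertical'' edges $r\mapsto A_{0}-B_{0}+rI$ and $r\mapsto A_{1}-B_{1}+rI$. These are positive paths (fixed domain!), so their spectral flow is the sum of kernel dimensions; on the Maslov side the paper computes $-J\gamma_{0,r}(T)^{-1}\partial_r\gamma_{0,r}(T)=-TI$, so every crossing in $r$ is negative definite and the Maslov index is minus the same sum. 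This matches the signs directly and finishes.

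Your approach trades that explicit computation for an application of Theorem~\ref{th:coginva}: conjugating by $\Phi_s=\gamma_s(\cdot)$ kills $B_s$ and moves all the $s$-dependence into the boundary Lagrangian, leaving the clean model problem $-sf(-J\d|_{E(L_s)})=\mu(L_s,\Delta)$. That is a nice use of the paper's own main theorem. The cost is your Step~3: you now face a path of operators whose \emph{domain} varies with $s$, and your crossing-form matching has to be made precise in that setting (trivialize the domain by a further $s$-dependent cogredient, then differentiate --- the boundary term is indeed the Robbin--Salamon form, but one must check the sign and the regularity of the trivialization). The paper's method sidesteps exactly this difficulty: by freezing $s$ and varying $r$, the domain is constant along each path where a computation is needed. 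So the paper's proof is more elementary and self-contained; yours is structurally cleaner but, unless you cite the literature at the end of Step~3, leaves the harder analytic point to the reader. One small slip: in Step~2, $\mathrm{diag}(I_n,\gamma_s(T)^{-1})$ should read $\mathrm{diag}(I_{2n},\gamma_s(T)^{-1})$.
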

  The above Theorem is well known \cite{HP17},  we like to give a direct proof here.
    \begin{proof}
    	We use the idea of Lemma \ref{abstract_decomposition}.
    	We only need to prove the theorem locally.
    Let $s_0\in [0,1]$. $A_{s_0}-B_{s_0}+rI,r\in [0,1]$ is a positive path in $\cal{FS}(\cH)$.
    There is $\epsilon>0$ such that  $\ker(A_{s_0}-B_{s_0}+\epsilon I)={0}$.
    Then there is $\delta>0$ such that $\ker(A_{s}-B_{s}+\epsilon I)={0}$, $\forall s\in [s_0-\delta,s_0+\delta]$.
    Without loss of generality, we can assume that   $$\ker(A_{s}-B_{s}+I)={0},\,  \forall s\in [0,1].$$
    Let $\gamma_{s,r}(t)$ be the fundamental solution of the equation
    \begin{eqnarray} \dot{z}(t)= J (B_s(t)-rI)z(t), (s,t)\in[0,1]\times[0,T],\, r\in [0,1].  
    \end{eqnarray}
    Recall that $\dim(\Graph(\gamma_{s,r}(T))\cap \Lambda_s)=\dim\ker(A_s-B_s+rI)$, then
    we have $\mu(\Lambda_s,Gr(\gamma_{s,1}(T)))=0$.

    Then use the homotopy invariant property of spectral flow and Maslov index, we have
    \bea
    \begin{cases}\label{eq:sf_mas}
    	sf(A_s-B_s,s\in [0,1])=sf(A_{0}-B_{0}+rI)-sf(A_1-B_1+rI)\\
    	\mu(\Lambda_s,\Graph(\gamma_s(T)))=\mu(\Lambda_0,\Graph(\gamma_{0,r}(T)))-\mu(\Lambda_1,\Graph(\gamma_{0,r}(T)))
    \end{cases}.
  \eea
Note that $A_0-B_0+rI$ is a positive path in $\cal{FS}(\cH)$, 
then we have 
$$
sf(A_0-B_0+rI)=\sum_{0<r\le 1} \dim\ker(A_0-B_0+rI).
$$
Let $Q_t((x,\gamma(t) x),(y,\gamma(t) y))=<-J\gamma(t)^{-1}\dot{\gamma}(t)x,y>$
which  is a quadratic form on $\Graph(\gamma(t))$. 
Recall that the crossing form of the Lagrangian pair $(\Lambda,\Graph(\gamma(t)))$ is given by
$Q_t|_{Gr(\gamma(t))\cap \Lambda}$.

Note that
 \begin{eqnarray*}
 \frac{\partial}{\partial t}(\gamma_s(t)^{-1}\frac{\partial}{\partial s}\gamma_s(t))=-\gamma_s(t)^{-1}\frac{\partial}{\partial t}\gamma_s(t)\gamma_s(t)^{-1}\frac{\partial}{\partial s}\gamma_s(t)
 +\gamma_s(t)^{-1}\frac{\partial}{\partial s}(\frac{\partial}{\partial t}\gamma_s(t))\\
 =-\gamma_s(t)^{-1}\frac{\partial}{\partial t}\gamma_s(t)\gamma_s(t)^{-1}\frac{\partial}{\partial s}\gamma_s(t)
 +\gamma_s(t)^{-1}\frac{\partial}{\partial s}(JB_s)\gamma_s(t)+\gamma_s(t)^{-1}JB_s\frac{\partial}{\partial s}\gamma_s(t)\\
 =\gamma_s(t)^{-1}\frac{\partial}{\partial s}(JB_s)\gamma_s(t).
 \end{eqnarray*}
Then we have
$\frac{\partial}{\partial t}(-J\gamma_{0,r}(t)^{-1}\frac{\partial}{\partial r}\gamma_{0,r}(t))=-I$, and
it follows that
$-J\gamma_{0,r}(t)^{-1}\frac{\partial}{\partial r}\gamma_{0,r}(t)=-TI$.
Thus  we have
\begin{eqnarray*}
\mu(\Lambda_0,\Graph(\gamma_{0,r}(T)))&&=-\sum_{0<r\le 1}\dim(\Lambda_0\cap \Graph(\gamma_{0,r}(T)))\\
&&=-\sum_{0<r\le 1}\dim(\ker(A_0-B_0+rI))=-sf(A_0-B_0+rI).
\end{eqnarray*}
Similarly 
$$
\mu(\Lambda_1,\Graph(\gamma_{1,r}(T)))=-sf(A_1-B_1+rI).
$$
Then by \eqref{eq:sf_mas}, we get \eqref{th:morma}.
  \end{proof}
  From  the homotopy invariance of Maslov index, we have
  \begin{cor} \label{6.2}  \bea  -sf(A-sB)=\mu(\Lambda,Gr(\gamma(t)), t\in[0,T]). \eea 
  \end{cor}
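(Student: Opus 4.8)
The plan is to deduce Corollary~\ref{6.2} from Theorem~\ref{th:6.1} by first specializing that theorem and then exchanging the homotopy parameter $s$ for the time parameter $t$. I would apply Theorem~\ref{th:6.1} to the one-parameter family obtained by taking $\Lambda_s\equiv\Lambda$ constant and $B_s(t)=sB(t)$, $s\in[0,1]$. Then $A_s-B_s=A-sB$, and the fundamental solution $\gamma_s(t)$ of $\dot z=JB_s(t)z=sJB(t)z$ satisfies $\gamma_s(0)=I_{2n}$, $\gamma_0(t)\equiv I_{2n}$ and $\gamma_1(t)=\gamma(t)$, the last being exactly the fundamental solution in the statement. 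Theorem~\ref{th:6.1} then yields
\bea -sf(A-sB;\,s\in[0,1])=\mu(\Lambda,Gr(\gamma_s(T));\,s\in[0,1]),\nonumber\eea
so it remains to prove
\bea \mu(\Lambda,Gr(\gamma_s(T));\,s\in[0,1])=\mu(\Lambda,Gr(\gamma(t));\,t\in[0,T]).\nonumber\eea

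For this I would use the continuous two-parameter family $\Phi(s,t):=\gamma_s(t)\in\Sp(2n)$, $(s,t)\in[0,1]\times[0,T]$, which satisfies $\Phi(s,0)\equiv I_{2n}$ and $\Phi(0,t)\equiv I_{2n}$. In the square $[0,1]\times[0,T]$ consider the two paths from $(0,0)$ to $(1,T)$: the path $\alpha$ running first along the left edge $s=0$ (with $t$ from $0$ to $T$) and then along the top edge $t=T$ (with $s$ from $0$ to $1$), and the path $\beta$ running first along the bottom edge $t=0$ (with $s$ from $0$ to $1$) and then along the right edge $s=1$ (with $t$ from $0$ to $T$). These two paths are homotopic relative to their endpoints, hence $Gr(\Phi\circ\alpha)$ and $Gr(\Phi\circ\beta)$ are homotopic Lagrangian paths sharing the fixed endpoint pairs $(\Lambda,Gr(I_{2n}))$ and $(\Lambda,Gr(\gamma(T)))$; in particular the endpoint intersection dimensions required by the homotopy invariance property \eqref{adp1.2} are constant, so
\bea \mu(\Lambda,Gr(\Phi\circ\alpha))=\mu(\Lambda,Gr(\Phi\circ\beta)).\nonumber\eea
Along $\alpha$ the path $\Phi$ is the constant $I_{2n}$ on the left edge followed by $s\mapsto\gamma_s(T)$ on the top edge; by path additivity \eqref{adp1.3}, and since a constant Lagrangian path has vanishing Maslov index (immediate from \eqref{adp1.1} and \eqref{adp1.3}), the left-hand side equals $\mu(\Lambda,Gr(\gamma_s(T));\,s\in[0,1])$. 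Along $\beta$ the path $\Phi$ is the constant $I_{2n}$ on the bottom edge followed by $t\mapsto\gamma_1(t)=\gamma(t)$ on the right edge, so the right-hand side equals $\mu(\Lambda,Gr(\gamma(t));\,t\in[0,T])$. Combining the three displayed identities gives the corollary.

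The argument is essentially bookkeeping, and I do not expect a genuine obstacle. The only points requiring care are the correct specialization of Theorem~\ref{th:6.1} — in particular that replacing $B$ by $sB$ rescales the generator of the flow to $sJB$, so that $\gamma_1=\gamma$ as in the statement — and checking the corner intersection dimension hypotheses of \eqref{adp1.2}, which here is automatic because the endpoints of $\alpha$ and $\beta$ are fixed points of the square.
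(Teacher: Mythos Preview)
Your proposal is correct and is exactly the argument the paper has in mind: the paper states only that the corollary follows ``from the homotopy invariance of Maslov index,'' and you have supplied precisely those details by specializing Theorem~\ref{th:6.1} to $\Lambda_s\equiv\Lambda$, $B_s=sB$ and then running the square homotopy $\Phi(s,t)=\gamma_s(t)$ with constant edges $\Phi(0,\cdot)=\Phi(\cdot,0)=I_{2n}$.
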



\noindent {\bf Acknowledgements.} We would like to thank Professor Alessandro Portaluri some helpful discussion with us for the spectral flow.

\end{document}